\title[ ]{Characterization of Beauville's algebraic numbers via Hodge theory}
\author[Muxi Li]{Muxi Li}
\author[Mao Sheng]{Mao Sheng}
\email{limuxi@ustc.edu.cn}
\email{msheng@ustc.edu.cn}
\address{School of Mathematical Sciences, University of Science and Technology of China, Hefei, 230026, China}
\begin{document}
%%%%%%%%%%%%%%%%%%%% Text italic %%%%%%%%%%%%%%%%%%%%%%%%%%%%
\theoremstyle{plain}
\newtheorem{thm}{Theorem}[section]
\newtheorem{theorem}[thm]{Theorem}
\newtheorem*{theorem*}{Theorem}
\newtheorem*{theoremA*}{Theorem A}
\newtheorem*{theoremB*}{Theorem B}
\newtheorem*{theoremC*}{Theorem C}
\newtheorem*{definition*}{Definition}
\newtheorem{lemma}[thm]{Lemma}
\newtheorem{sublemma}[thm]{Sublemma}
\newtheorem{corollary}[thm]{Corollary}
\newtheorem*{corollary*}{Corollary}
\newtheorem{prop}[thm]{Proposition}
\newtheorem{addendum}[thm]{Addendum}
\newtheorem{variant}[thm]{Variant}
%%%%%%%%%%%%%%%%%%%% Text roman %%%%%%%%%%%%%%%%%%%%%%%%%%%%%
\theoremstyle{definition}
\newtheorem{lemma-definition}[thm]{Lemma-Definition}
\newtheorem{proposition-definition}[thm]{Proposition-Definition}
\newtheorem{theorem-definition}[thm]{Theorem-Definition}
\newtheorem{construction}[thm]{Construction}
\newtheorem{notations}[thm]{Notations}
\newtheorem{question}[thm]{Question}
\newtheorem{problem}[thm]{Problem}
\newtheorem*{problem*}{Problem}
\newtheorem{remark}[thm]{Remark}
\newtheorem*{remark*}{Remark}
\newtheorem{remarks}[thm]{Remarks}
\newtheorem{definition}[thm]{Definition}
\newtheorem{claim}[thm]{Claim}
\newtheorem{assumption}[thm]{Assumption}
\newtheorem{assumptions}[thm]{Assumptions}
\newtheorem{properties}[thm]{Properties}
\newtheorem{example}[thm]{Example}
\newtheorem{conjecture}[thm]{Conjecture}
\numberwithin{equation}{thm}

% Skriptbuchstaben
\newcommand{\sA}{{\mathcal A}}
\newcommand{\sB}{{\mathcal B}}
\newcommand{\sC}{{\mathcal C}}
\newcommand{\sD}{{\mathcal D}}
\newcommand{\sE}{{\mathcal E}}
\newcommand{\sF}{{\mathcal F}}
\newcommand{\sG}{{\mathcal G}}
\newcommand{\sH}{{\mathcal H}}
\newcommand{\sI}{{\mathcal I}}
\newcommand{\sJ}{{\mathcal J}}
\newcommand{\sK}{{\mathcal K}}
\newcommand{\sL}{{\mathcal L}}
\newcommand{\sM}{{\mathcal M}}
\newcommand{\sN}{{\mathcal N}}
\newcommand{\sO}{{\mathcal O}}
\newcommand{\sP}{{\mathcal P}}
\newcommand{\sQ}{{\mathcal Q}}
\newcommand{\sR}{{\mathcal R}}
\newcommand{\sS}{{\mathcal S}}
\newcommand{\sT}{{\mathcal T}}
\newcommand{\sU}{{\mathcal U}}
\newcommand{\sV}{{\mathcal V}}
\newcommand{\sW}{{\mathcal W}}
\newcommand{\sX}{{\mathcal X}}
\newcommand{\sY}{{\mathcal Y}}
\newcommand{\sZ}{{\mathcal Z}}
% Sonderbuchstaben mit Doppellinie
\newcommand{\A}{{\mathbb A}}
\newcommand{\B}{{\mathbb B}}
\newcommand{\C}{{\mathbb C}}
\newcommand{\D}{{\mathbb D}}
\newcommand{\E}{{\mathbb E}}
\newcommand{\F}{{\mathbb F}}
\newcommand{\G}{{\mathbb G}}
\newcommand{\HH}{{\mathbb H}}
\newcommand{\I}{{\mathbb I}}
\newcommand{\J}{{\mathbb J}}
\renewcommand{\L}{{\mathbb L}}
\newcommand{\M}{{\mathbb M}}
\newcommand{\N}{{\mathbb N}}
\renewcommand{\P}{{\mathbb P}}
\newcommand{\Q}{{\mathbb Q}}
\newcommand{\R}{{\mathbb R}}
\newcommand{\SSS}{{\mathbb S}}
\newcommand{\T}{{\mathbb T}}
\newcommand{\U}{{\mathbb U}}
\newcommand{\V}{{\mathbb V}}
\newcommand{\W}{{\mathbb W}}
\newcommand{\X}{{\mathbb X}}
\newcommand{\Y}{{\mathbb Y}}
\newcommand{\Z}{{\mathbb Z}}
\newcommand{\id}{{\rm id}}
\newcommand{\rank}{{\rm rank}}
\newcommand{\END}{{\mathbb E}{\rm nd}}
\newcommand{\End}{{\rm End}}
\newcommand{\Hom}{{\rm Hom}}
\newcommand{\Hg}{{\rm Hg}}
\newcommand{\tr}{{\rm tr}}
\newcommand{\Sl}{{\rm Sl}}
\newcommand{\Gl}{{\rm Gl}}
\newcommand{\Cor}{{\rm Cor}}
\newcommand{\Aut}{\mathrm{Aut}}
\newcommand{\Sym}{\mathrm{Sym}}
\newcommand{\ModuliCY}{\mathfrak{M}_{CY}}
\newcommand{\HyperCY}{\mathfrak{H}_{CY}}
\newcommand{\ModuliAR}{\mathfrak{M}_{AR}}
\newcommand{\Modulione}{\mathfrak{M}_{1,n+3}}
\newcommand{\Modulin}{\mathfrak{M}_{n,n+3}}
\newcommand{\Gal}{\mathrm{Gal}}
\newcommand{\Spec}{\mathrm{Spec}}
\newcommand{\res}{\mathrm{res}}
\newcommand{\coker}{\mathrm{coker}}
\newcommand{\Jac}{\mathrm{Jac}}
\newcommand{\HIG}{\mathrm{HIG}}
\newcommand{\MIC}{\mathrm{MIC}}
\newcommand{\SL}{\mathrm{SL}}
\thanks{The work is supported by National Natural Science Foundation of China (Grant No. 11622109, No. 11721101), Chinese Universities Scientific Fund (CUSF) and Anhui Initiative in Quantum Information Technologies (AHY150200)}
\maketitle
\begin{abstract} 
We provide a Hodge theoretical characterization of the set of algebraic numbers which arises from the complete list, due to A. Beauville \cite{Be}, of semistable families of elliptic curves over $\P^1$ with four singular fibers. Our technical innovation is the analysis of the periodicity of the uniformizing Higgs bundle attached to $\P^1$ minus four points over the field of complex numbers.  
\end{abstract}
\section{Introduction}
In the beautiful work \cite{Be}, Beauville gives a complete list of semistable families of elliptic curves over $\P^1$ with four singular fibers. Based on his classification, it is easy to obtain the complete list of complex numbers $\lambda$ such that there is a semistable family of elliptic curves over $\P^1$ with four singular fibers along $\{0,1,\lambda,\infty\}$, that is the following list of algebraic numbers:
\begin{align}\label{beauville's list}
\lambda\in &\{-1,\ 2,\ 1/2,\ -8, \ &&9, \ -1/8,\ 9/8, \ &&1/9, \ 8/9,\\
&(1-\sqrt{-3})/2, &&(1+\sqrt{-3})/2,&& \nonumber \\
&(-123-55\sqrt{5})/2, &&(125+55\sqrt{5})/2, &&(-123+55\sqrt{5})/2, \nonumber \\ 
&(125-55\sqrt{5})/2, &&(25-11\sqrt{5})/50, &&(25+11\sqrt{5})/50.\} \nonumber
\end{align}
We call an algebraic number in the above list a \emph{Beauville's algebraic number}. These numbers have a clear geometric meaning. Namely, these are all possible values such that $\P^1-\{0,1,
\lambda,\infty\}$ is a \emph{modular curve}. Consider an arbitrary $\lambda\in \C$ different from $\{0,1\}$, the fundamental group of $\P^1-\{0,1,\lambda,\infty\}$ is freely generated by three loops. As well-known, the uniformization theorem of Riemann surfaces gives rise to the uniformizing representation which does depend on $\lambda$
$$
\rho_{\lambda}: \pi_1(\P^1-\{0,1,\lambda,\infty\})\to \SL_2(\R). 
$$
When $\lambda$ is a Beauville's algebraic number, $\rho_{\lambda}$ admits a $\Z$-lattice structure. It is well-known that $\rho_{\lambda}$ underlies a weight one polarized $\R$-VHS. Therefore, Beauville's work \cite{Be} amounts to the classification of $\lambda$s such that $\rho_{\lambda}$ underlies a weight one polarized $\Z$-VHS. We may do this a bit better: 
\begin{theorem}\label{main result}
Let $\lambda\neq 0,1$ be a complex number. Then it is a Beauville's algebraic number if and only if the associated uniformizing representation $\rho_{\lambda}$ satisfies the following properties:
\begin{itemize}
	\item [(i)] $\rho_{\lambda}$ factors through $\SL_2(\sO_F)\subset \SL_2(\R)$ for some totally real subfield $F\subset \R$;
	\item [(ii)] $\textrm{Res}_{F|\Q}\rho_{\lambda}$ is a variation of Hodge structure;
	\item [(iii)] There exists one point $x\in \P^1-\{0,1,\lambda,\infty\}$ such that the multiplication by any element of $\sO_F$ is of Hodge type $(0,0)$. 
\end{itemize}
\end{theorem}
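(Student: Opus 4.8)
The plan is to use Beauville's classification, recalled in the introduction, as a bridge: $\lambda$ is a Beauville's algebraic number if and only if $\rho_\lambda$ underlies a weight-one polarized $\Z$-VHS. The intrinsic starting point is that $\rho_\lambda$ always underlies the uniformizing weight-one polarized $\R$-VHS $\V_\lambda$, whose Higgs bundle is the uniformizing Higgs bundle; crucially its Higgs field is maximal (Arakelov equality), a feature that both the periodicity analysis and the comparison below exploit.

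For the forward implication I would argue as follows. If $\lambda$ is a Beauville's algebraic number, the introduction already records that $\rho_\lambda$ admits a $\Z$-lattice structure, i.e.\ it is conjugate into $\SL_2(\Z)$; equivalently, the geometric weight-one $\Z$-VHS $R^1 f_*\Z$ of the associated semistable elliptic family, which also has maximal Higgs field, is identified with $\V_\lambda$ by the defining property of the uniformizing Higgs bundle. Taking $F=\Q$ then makes (i) immediate, renders (ii) vacuous since $\mathrm{Res}_{\Q|\Q}\rho_\lambda=\rho_\lambda$, and gives (iii) at once because multiplication by an integer is scalar and hence of Hodge type $(0,0)$ at every point.

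The substance is the reverse implication. Assuming (i)--(iii) for some totally real $F$, condition (i) produces an $\sO_F$-local system $\V_{\lambda,F}$ with $\V_{\lambda,F}\otimes_{F,\iota}\R=\V_\lambda$ at the distinguished embedding $\iota$, and (ii) makes $W:=\mathrm{Res}_{F|\Q}\V_{\lambda,F}$ a polarizable weight-one $\Q$-VHS of rank $2[F:\Q]$ carrying an $\sO_F$-action. By (iii) this action is of Hodge type $(0,0)$ at $x$, so the fiber $W_x$ is a weight-one polarized Hodge structure with real multiplication by $\sO_F$; choosing an $\sO_F$-stable lattice exhibits $W$ as the $\Z$-VHS of a polarized family of abelian varieties with real multiplication whose $\iota$-component is the maximal-Higgs-field object $\V_\lambda$. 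The task is then to show that this forces $\lambda$ into the list \eqref{beauville's list}, and here the periodicity of the uniformizing Higgs bundle is the essential tool: the combination of the $\sO_F$-structure, the integrality of $W$, and the maximality of the Higgs field of the $\iota$-component is precisely what makes the uniformizing Higgs bundle periodic in the sense analyzed earlier, and periodicity pins $\lambda$ down to the finitely many Beauville values.

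I expect the main obstacle to be exactly this last step, in two respects. First, one must rule out, or rather fold into the periodic picture, the a priori possibility that $[F:\Q]>1$ (semi-arithmetic or Hilbert-modular type situations with genuine modular embeddings), for which Beauville's rank-two classification does not directly apply; the periodicity analysis over $\C$ is what replaces the classical rank-two argument uniformly in $F$. Second, one must check that the distinguished factor of the restriction-of-scalars VHS really reconstructs $\V_\lambda$ with its maximal Higgs field, so that no spurious Hodge structure on the same underlying local system intervenes; this is where the rigidity and periodicity of the uniformizing Higgs bundle close the argument.
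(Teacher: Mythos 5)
Your forward direction matches the paper's: take $F=\Q$, identify $\rho_{\lambda}$ with the monodromy of Beauville's elliptic family via the Simpson correspondence (both correspond to the uniformizing Higgs bundle, and irreducibility upgrades the complex isomorphism to a real one), after which (i)--(iii) are immediate. Your construction of a polarized family of abelian varieties with $\sO_F$-multiplication from (i)--(iii) is also exactly the paper's Section~\ref{construction of families}. The gap is in how you close the reverse direction. You assert that ``periodicity pins $\lambda$ down to the finitely many Beauville values.'' That statement is precisely Conjecture~\ref{conjecture}, which the authors explicitly leave open: the only unconditional consequences of periodicity they establish are one-periodicity (Proposition~\ref{one periodicity}) and algebraicity of $\lambda$ (Proposition~\ref{transcendental number}). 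So your plan, as written, rests on an unproven --- and, per the paper, apparently hard --- arithmetic statement, and does not yield a proof.

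The paper's actual mechanism avoids the conjecture entirely. It first forces $F=\Q$ by a contradiction internal to the periodicity theory: suppose $d=[F:\Q]>1$ and pick, by \v{C}ebotarev, an inert prime $\mathfrak p$ of $F$ over a large rational prime $p$, so that $\sO_F\otimes_{\Z}\Z_p\cong \Z_{p^d}$. The mod $p$ crystalline representation $\V$ attached to the one-periodic flow of the Kodaira--Spencer system of the spread family then carries an $\sO_F\otimes\F_p\cong\F_{p^d}$-action, hence is the restriction of scalars of a rank-two $\F_{p^d}$-representation; by Corollaries 3.10 and 3.11(ii) of \cite{LSZ}, every simple Higgs factor --- in particular $(E_{unif},\theta_{unif})$, which sits inside $(E,\theta)$ as a direct factor by Proposition~\ref{polystable} --- must then be $d$-periodic with pairwise non-isomorphic factors. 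This contradicts Proposition~\ref{one periodicity}, which says the uniformizing Higgs bundle, if periodic, is one-periodic. Once $F=\Q$, one has an honest rank-two weight-one polarized $\Z$-VHS on $\rho_{\lambda}$, i.e.\ a semistable elliptic fibration over $\P^1$ with four singular fibers, and it is Beauville's geometric classification --- not periodicity --- that places $\lambda$ in the list~\ref{beauville's list}. You should replace your final step with this inert-prime/restriction-of-scalars argument; as it stands the proof does not close.
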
  
We also intend to give another characterization of Beauville's algebraic numbers. Recall that in the Hitchin-Simpson's approach to the uniformization theory of Riemann surfaces, one studies the so-called uniformizing Higgs bundle $(E_{\lambda},\theta_{\lambda})$ which is constructed as follows:
$$
E_{\lambda}=\sO_{\P^1}(1)\oplus \sO_{\P^1}(-1),\quad \theta_{\lambda}=\theta_{\lambda}^{1}\oplus \theta_{\lambda}^{0},
$$
where $\theta^{0}=0$ and $\theta^{1}: \sO_{\P^1}(1)\to \sO_{\P^1}(-1)\otimes \Omega_{\P^1}(0+1+\lambda+\infty)$ is an isomorphism. Under the correspondence established in \cite{Si}, the complex local system $\rho_{\lambda}\otimes \C$ and the logarithmic Higgs bundle $(E_{\lambda},\theta_{\lambda})$ correspond to each other. 
\begin{remark}
Using the theory of tame harmonic bundles \cite{Si}, one should be able to provide an obvious analogue of Lemma 2.11 \cite{Si92} in the quasi-projective curve case. So one may argue that $\rho_{\lambda}$ corresponds to $(E_{\lambda},\theta_{\lambda})$ as \emph{real} local system. 
\end{remark} 
Based on the work of \cite{LSZ}, we connect the $\Z$-lattice structure of $\rho_{\lambda}$ with the \emph{periodicity} of $(E_{\lambda},\theta_{\lambda})$. More precisely, we have the following
\begin{prop}[Proposition \ref{beauville's list implies periodicity}]
For any Beauville's algebraic number $\lambda$, $(E_{\lambda},\theta_{\lambda})$ is periodic. 
\end{prop}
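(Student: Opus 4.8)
The plan is to deduce periodicity from a \emph{geometric origin} of $\rho_\lambda$, feeding the resulting $\Z$-structure into the Higgs--de Rham flow machinery of \cite{LSZ}. Since $\lambda$ is a Beauville's algebraic number, Beauville's classification \cite{Be} furnishes a semistable family of elliptic curves $f\colon X\to\P^1$ whose singular fibres lie exactly over $D=\{0,1,\lambda,\infty\}$. Writing $U=\P^1-D$, the local system $R^1(f|_U)_*\Z$ is a polarized weight one $\Z$-VHS on $U$, and because $U$ is the modular curve carrying $f$ its monodromy is the Fuchsian group uniformizing $U$; hence $R^1(f|_U)_*\C\cong\rho_\lambda\otimes\C$. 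First I would pin down the Higgs bundle of this VHS. By Simpson's correspondence \cite{Si} the Higgs bundle is determined by the underlying local system, so it must agree with $(E_\lambda,\theta_\lambda)$; as an independent check, the graded of the Deligne extension of $(H^1_{\mathrm{dR}},\nabla_{\mathrm{GM}})$ is a rank two logarithmic Higgs bundle $\sL\oplus\sL^{-1}$ on $(\P^1,D)$ with Higgs field $\sL\to\sL^{-1}\otimes\Omega_{\P^1}(\log D)$, and the Arakelov inequality $\deg\sL\le\tfrac12\deg\Omega_{\P^1}(\log D)=1$ together with maximality of the Higgs field (equality, which holds precisely for such modular families) forces $\sL\cong\sO_{\P^1}(1)$ and $\theta$ an isomorphism.

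With the identification $\mathrm{Gr}\,R^1f_*=(E_\lambda,\theta_\lambda)$ in hand, the task reduces to converting the integral structure into periodicity, which is where I would invoke \cite{LSZ}. Choose a number field $K\supset\Q(\lambda)$ and spread $(\P^1,D)$, the family $f$, and $(E_\lambda,\theta_\lambda)$ to a model over $\sO_K[1/N]$ for a suitable $N$. For each prime $\mathfrak p\nmid N$ of good semistable reduction, with residue field $k$, the crystalline cohomology of the reduced family equips $(H^1_{\mathrm{dR}},\nabla_{\mathrm{GM}},\mathrm{Fil})$ over $W(k)$ with a Frobenius structure; this is exactly a logarithmic Fontaine--Faltings module. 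By the equivalence of \cite{LSZ} between Fontaine--Faltings modules (equivalently crystalline representations) and periodic Higgs--de Rham flows, the associated graded Higgs bundle---namely the reduction of $(E_\lambda,\theta_\lambda)$ modulo $\mathfrak p$---is the leading term of a periodic Higgs--de Rham flow, i.e.\ it is periodic in characteristic $p$.

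It remains to assemble these reductions into the complex statement. Granting the paper's notion of periodicity over $\C$, which is formulated so as to be detected by the reductions of a fixed $\sO_K[1/N]$-model, the compatible family of periodic flows produced above at almost all $\mathfrak p$ yields periodicity of $(E_\lambda,\theta_\lambda)$ over $\C$; the period stays bounded because it is controlled by the residue degrees attached to the single motive $R^1f_*$.

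The step I expect to be the main obstacle is running the \cite{LSZ} correspondence in the \emph{logarithmic, parabolic} setting dictated by the four punctures and by the semistable (rather than smooth) degenerations. Concretely, one must match the parabolic and residue data of $(E_\lambda,\theta_\lambda)$ with those forced by the tame harmonic bundle attached to $\rho_\lambda$, verify that strong semistability and the grading survive reduction modulo $\mathfrak p$ for all but finitely many $\mathfrak p$ (which dictates the choice of $N$), and check that the periods obtained at the various $\mathfrak p$ are uniform enough to descend to a single notion of periodicity over $\C$.
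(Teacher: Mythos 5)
Your proposal follows essentially the same route as the paper: use Beauville's family to realize $(E_\lambda,\theta_\lambda)$ as the logarithmic Kodaira--Spencer system (identified via the same degree/Arakelov computation forcing $\deg E^{1,0}=1$), then spread out and invoke the crystalline Frobenius structure at almost all primes to produce a one-periodic Higgs--de Rham flow. The paper simply packages the second half as its general Theorem \ref{periodicity theorem} (resting on Faltings' Theorem 6.2 in \cite{Fa89} and Proposition 4.1 of \cite{LSZ0}), which you re-derive inline for this specific family.
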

For the notion of a \emph{periodic Higgs bundle} over $\C$, we refer our reader to Definition \ref{periodic definition}. Furthermore, we show the following statement. 
\begin{theorem}[Proposition \ref{one periodicity}, Proposition \ref{transcendental number}]
If $(E_{\lambda},\theta_{\lambda})$ is periodic, then it must be one-periodic and $\lambda$ must be algebraic. 
\end{theorem}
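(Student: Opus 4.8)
The plan is to treat the two assertions separately, handling one-periodicity first by a rigidity argument and algebraicity second by Galois descent. For one-periodicity I would exploit that the uniformizing Higgs bundle is rigid among graded logarithmic Higgs bundles on $(\P^1,\{0,1,\lambda,\infty\})$. The Higgs--de Rham flow sends a graded Higgs bundle to the associated graded of its inverse Cartier transform $C^{-1}$, and it preserves the relevant numerical type: rank two, weight-one Hodge numbers $(1,1)$, logarithmic poles along the same four points with the same residue (parabolic) data, and the maximality of the Higgs field, i.e.\ the Arakelov--Simpson inequality remaining an equality along the flow. First I would record that maximality forces the nonzero component $\theta^1\colon L_1\to L_0\otimes\Omega_{\P^1}(\log D)$ (with $D=0+1+\lambda+\infty$) to be an isomorphism, whence $\deg L_1-\deg L_0=\deg\Omega_{\P^1}(\log D)=2$; together with the parabolic-degree normalization this pins the underlying bundle to $\sO_{\P^1}(1)\oplus\sO_{\P^1}(-1)$, and then $\theta^1$ is determined up to a scalar that does not change the isomorphism class. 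Thus the set of uniformizing-type graded Higgs bundles is a single point, so the flow cannot run through a nontrivial cycle and the period must be one.

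The delicate point here is the interaction of $C^{-1}$ with the parabolic and Frobenius structures in characteristic $p$: the transform involves a Frobenius pullback scaling degrees by $p$, and one must check that passing to the associated graded of the Hodge filtration restores the numerical type and that the residue data are preserved at each step. I expect this bookkeeping---verifying that the Arakelov equality and the parabolic weights are genuinely preserved along the flow, so that every term of the flow is again of uniformizing type---to be the main obstacle, and it is precisely where the ``analysis of the periodicity over $\C$'' advertised in the abstract does the real work.

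For the algebraicity of $\lambda$ I would argue by contradiction using field automorphisms together with a countability estimate. After spreading the data out over a finitely generated $\Z$-subalgebra of $\C$, the condition that $(E_\lambda,\theta_\lambda)$ be periodic is cut out by algebraic equations defined over the prime field (the flow, and the isomorphism witnessing periodicity, are algebraic over the base); in particular the periodic locus $S=\{\lambda\colon (E_\lambda,\theta_\lambda)\ \text{is periodic}\}$ is stable under $\Aut(\C/\overline{\Q})$. On the other hand, by the correspondence of \cite{LSZ} periodicity endows $\rho_\lambda$ with an integral structure, so $\rho_\lambda$ is conjugate into $\SL_2(\sO_F)$ for some number field $F$. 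Since the uniformizing representation recovers the quotient $\HH/\rho_\lambda(\pi_1)$ and hence determines $\lambda$ up to the finite cross-ratio symmetry, and since there are only countably many number fields and countably many elements of $\Hom(\pi_1,\SL_2(\sO_F))$ up to conjugacy, the set $S$ is countable. A countable, $\Aut(\C/\overline{\Q})$-stable subset of $\C$ cannot contain a transcendental element, whose $\Aut(\C/\overline{\Q})$-orbit is uncountable; therefore $S\subseteq\overline{\Q}$, i.e.\ $\lambda$ is algebraic.

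Two ingredients of this second part need care. The first is that periodicity is literally an arithmetic condition descending to $\overline{\Q}$, so that it commutes with applying $\sigma\in\Aut(\C)$ to all the data and yields periodicity of $(E_{\sigma\lambda},\theta_{\sigma\lambda})$; this should follow directly from Definition \ref{periodic definition} once the flow is set up over the spread-out base. The second is that the integral structure furnished by periodicity really forces $\rho_\lambda$ into $\SL_2$ of a ring of integers with only countably many possibilities---here I would either invoke the $\overline{\Q}$-model coming from the crystalline/Fontaine--Faltings description in \cite{LSZ} or, at the level of the mod-$p$ reductions, observe that $f$-periodic rank-two Higgs bundles over $\overline{\F}_p$ form a finite set, so that the full periodic locus over $\C$ is a countable intersection. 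Granting these, the one-periodicity established in the first part is compatible with, and sharpens, the count by fixing a rank-two coefficient field.
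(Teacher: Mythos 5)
Your first half (one-periodicity) follows the same numerical strategy as the paper --- the Higgs field gives a nonzero map $L^{\otimes 2}\to\Omega_{\P^1}(\log D)\cong\sO(2)$, forcing $\deg L\in\{0,1\}$, and $\deg L=0$ is excluded, so the bundle is $\sO(1)\oplus\sO(-1)$ with $\theta$ an isomorphism --- but the load-bearing step is precisely the one you set aside as ``the main obstacle'': that every intermediate term of the flow is again of uniformizing type. The paper does \emph{not} establish this by showing the Arakelov equality or parabolic data are preserved; it proves instead that \emph{stability} is preserved along a periodic flow (Lemma \ref{black hole}, by counting Jordan--H\"older factors: the flow operator sends a Jordan--H\"older filtration to a filtration of degree-zero Higgs subbundles, so the numbers $n_i$ of stable factors satisfy $n_0\le n_1\le\cdots\le n_f=n_0$). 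Stability, gradedness, rank two and trivial determinant then pin each intermediate term down to the uniformizing bundle, and one-periodicity follows. Without Lemma \ref{black hole} or a substitute for it, your first part is an outline of the desired conclusion rather than a proof; note also that $\deg L=0$ is excluded by exhibiting the destabilizing Higgs subbundle $(\sO_{\P^1},0)$, i.e.\ by stability, so stability is doing the work even in the step you did carry out.

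The second half has a more serious gap. Your source of countability is the claim that ``by the correspondence of \cite{LSZ} periodicity endows $\rho_\lambda$ with an integral structure, so $\rho_\lambda$ is conjugate into $\SL_2(\sO_F)$.'' The results of \cite{LSZ} produce, from a periodic Higgs bundle in characteristic $p$, a crystalline $\F_{p^f}$- (or $\Z_{p^f}$-) representation of the \emph{arithmetic} fundamental group of the reduction; they do not produce an $\sO_F$-lattice in the complex representation $\rho_\lambda$. Indeed, ``periodic implies $\rho_\lambda$ integral'' is essentially the paper's Conjecture \ref{conjecture} (periodic implies Beauville), which is left open, so your argument assumes what is to be proved. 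The $\Aut(\C/\overline{\Q})$-stability of the periodic locus is plausible from Definition \ref{periodic definition}, but since $\Aut(\C/\overline{\Q})$ acts transitively on transcendentals, stability alone only tells you the locus either contains all transcendentals or none; you still need an independent input ruling out one transcendental. The paper supplies exactly this input by explicit computation: Proposition \ref{condition for periodicity in char p} shows that for a fixed $\lambda_0\in\bar\F_p$ the periodicity condition is $\det T_0=0$, a polynomial of degree $p$ in the lifting parameter $\lambda_1$, so at most $p$ of the infinitely many $W_2(k)$-liftings of $(\P^1,0+1+\lambda_0+\infty)$ admit a periodic flow. Since Definition \ref{periodic definition} requires periodicity for \emph{all} $W_2$-liftings at all closed points of the spread, and for transcendental $\lambda$ the spread is $\Spec(\Z[\lambda])\cong\A^1_{\Z}$ so that every lifting $(\lambda_0,\lambda_1)$ occurs, one gets a contradiction. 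This quantifier over liftings is the actual mechanism, and it is absent from your proposal.
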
 
We may call an algebraic number $\lambda$ \emph{periodic} if $(E_{\lambda},\theta_{\lambda})$ is periodic. The set of periodic algebraic numbers contains the Beauville's list \ref{beauville's list}. We make the following
\begin{conjecture}
The Beauville's algebraic numbers are \emph{all} periodic algebraic numbers. 
\end{conjecture}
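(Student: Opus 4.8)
The plan is to reduce the conjecture to the already-proved Theorem~\ref{main result}. Proposition~\ref{beauville's list implies periodicity} furnishes the inclusion of Beauville's list~\ref{beauville's list} into the set of periodic algebraic numbers, while the combination of Proposition~\ref{one periodicity} and Proposition~\ref{transcendental number} shows that any periodic $\lambda$ is automatically algebraic and, more importantly, \emph{one}-periodic. Hence the entire content of the conjecture is the reverse inclusion, and by Theorem~\ref{main result} it suffices to prove the single implication: if $(E_\lambda,\theta_\lambda)$ is one-periodic, then the uniformizing representation $\rho_\lambda$ satisfies conditions (i), (ii) and (iii). In other words, the whole problem is to manufacture, out of the a priori purely characteristic-$p$ periodicity datum of Definition~\ref{periodic definition}, a genuine arithmetic and Hodge-theoretic structure on $\rho_\lambda$ over $\C$.

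First I would unwind one-periodicity through the correspondence of \cite{LSZ}. Fixing an arithmetic model of $(E_\lambda,\theta_\lambda)$ over a finitely generated ring $R\subset\C$, one-periodicity means that for almost all primes the reduction is a fixed point of the Higgs--de Rham flow of period one, so that modulo each such $p$ the associated local system descends to $\F_p$-coefficients and acquires a Frobenius. This should produce, for almost every $p$, a crystalline $\Z_p$-local system refining $\rho_\lambda\otimes\Q_p$ whose Hodge filtration is the one read off from the periodic flow. Because the period equals one, these mod-$p$ structures should be compatible with a fixed number-field structure: the traces of $\rho_\lambda$ ought to generate a single number field independent of $p$, assembling the $\rho_\lambda\otimes\Q_p$ into a compatible system. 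The self-duality forced by $\det E_\lambda\cong\sO_{\P^1}$ together with the ambient $\SL_2(\R)$ should then cut this field down to a totally real $F$ and refine the $\Q_p$-structures to an $\sO_F$-lattice, which is condition (i). The Hodge filtration supplied by the graded pieces of the flow is exactly that of a weight-one variation, and its Galois conjugates over the embeddings of $F$ assemble into a variation on $\mathrm{Res}_{F|\Q}\rho_\lambda$, giving (ii); finally, the behaviour of the Higgs field $\theta_\lambda$, whose single nonzero component is an isomorphism, should identify the $(0,0)$-part of $\sO_F$-multiplication at a suitable point, which is (iii).

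The hard part, and the reason this is stated only as a conjecture, is the final step of the extraction above: passing from a coherent family of periodicities \emph{modulo almost all $p$} to a single \emph{global} lattice underlying a polarized $\Z$-variation over $\C$. This is an integrality problem of Grothendieck--Katz and Fontaine--Mazur type. What makes it genuinely delicate here is that $\rho_\lambda$ is \emph{not} rigid: rank-two local systems on $\P^1$ minus four points with prescribed unipotent local monodromies form a positive-dimensional moduli space (governed by the Painlev\'e~VI equation), so the integrality and companions techniques available for cohomologically rigid local systems do not apply, and the integral structure has to be produced from the periodicity datum itself rather than from the monodromy data. Controlling the finitely many primes of bad reduction of the model, and upgrading the collection of $p$-adic crystalline structures into one $\Z$-structure compatible with the archimedean $\R$-variation, is precisely where the difficulty concentrates. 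Once conditions (i)--(iii) are secured this way, Theorem~\ref{main result} identifies $\lambda$ with a member of the list~\ref{beauville's list}, and the finiteness of the set of periodic algebraic numbers comes for free.
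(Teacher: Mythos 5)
There is a genuine gap --- indeed the statement you were asked to prove is not proved in the paper at all: it is stated there as an open conjecture, and the authors explicitly reduce it to the unresolved arithmetic Question~\ref{arithmetic question} (via the explicit characteristic-$p$ periodicity criterion $\det(T_0)=0$, $\rank(T_1)=p-1$ of Propositions~\ref{condition for periodicity in char p} and~\ref{calculation of n}) and say that solving it is beyond their ability. Your proposal does not close this gap either. The first step is fine and matches the paper: Proposition~\ref{beauville's list implies periodicity} gives one inclusion, Propositions~\ref{one periodicity} and~\ref{transcendental number} give one-periodicity and algebraicity, and it is a legitimate reduction to say that, by Theorem~\ref{main result}, it would suffice to show that one-periodicity of $(E_{\lambda},\theta_{\lambda})$ forces properties (i)--(iii). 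But the entire second paragraph of your argument, which is where the actual content would have to live, is conditional throughout (``should produce,'' ``ought to generate,'' ``should then cut this field down''), and you concede in your final paragraph that the crucial step --- promoting the crystalline $\F_p$-structures existing for almost all $p$ to a single global $\sO_F$-lattice underlying a polarized $\Z$-VHS over $\C$ --- is an open integrality problem of Fontaine--Mazur/Grothendieck--Katz type, made worse by the non-rigidity of $\rho_{\lambda}$. Identifying the obstruction correctly is not the same as overcoming it: no mechanism is offered for why the traces should generate one number field independent of $p$, why that field should be totally real, or why the $p$-adic Hodge filtrations should glue to an archimedean variation. As written, the proposal is a research program, not a proof.

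It is also worth contrasting your intended route with the paper's own partial progress, since the two reductions are genuinely different. You propose a $p$-adic Hodge-theoretic globalization: unwind one-periodicity through the correspondence of \cite{LSZ} into crystalline local systems mod $p$ and then assemble these into conditions (i)--(iii) of Theorem~\ref{main result}. The paper instead proceeds computationally: it makes the inverse Cartier transform explicit via exponential twisting, turning one-periodicity with respect to the $W_2$-lifting $\lambda=(\lambda_0,\lambda_1)$ into concrete rank conditions on the matrices $T_0,T_1$, so that the conjecture becomes the statement that only Beauville's numbers satisfy these congruence conditions at almost all places (Question~\ref{arithmetic question}). If your program could be carried out it would be more conceptual and would prove something stronger (a complex analogue of the periodic-implies-arithmetic philosophy of \cite{LSZ}), but neither route is completed anywhere, and your text supplies no new argument at the decisive step.
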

A brute-force calculation on the periodicity reduces the conjecture to a concrete arithmetic question (see Question \ref{arithmetic question}). However, solving it is beyond our ability. So we would like to leave it for inspired readers. 

The paper is structured as follows: In Section 2, we introduce the notion of a periodic Higgs bundle over the field of complex numbers. We prove that a logarithmic Higgs subbundle of degree zero in the associated logarithmic Kodaira-Spencer system to a semistable family is periodic. In Section 3 we give a detailed analysis of the periodicity of the uniformizing Higgs bundle in both positive characteristic and zero characteristic. In Section 4, we construct a semistable family of abelian varieties with real multiplication under the Hodge theoretical properties listed in Theorem \ref{main result}. This is a straightforward step. In Section 5, we prove our main result Theorem \ref{main result}. 

{\bf Acknowledgement.} We would like to thank heartily Professor Kang Zuo for valuable discussions in the earlier formulation of Theorem \ref{main result}. The problem considered in this note stems from the joint work of the second named author with him \cite{LSYZ} (see particularly Proposition 3.7 loc. cit.). We would like also to mention his recent work \cite{Zuo} which contains a beautiful conjecture on the periodicity of \emph{another} rank two Higgs bundle over $\P^1$ removing four points. Also, in the forthcoming work \cite{KYZ}, Krishnamoorthy, Yang and Zuo shall provide a characterization of modular curves via $p$-adic periodic Higgs-de Rham flow.  

\section{Logarithmic Kodaira-Spencer systems and periodicity}
Let $C$ be a smooth projective curve over $\C$ and $D\subset C$ be a reduced effective divisor. Let $f: X\to C$ be a semistable family which is smooth over $U=C-D$. Set $B=f^{-1}D$. We attach to $f$ the \emph{logarithmic Kodaira-Spencer system} of degree $n$ for some $0\leq n\leq 2d$ (where $d$ is the relative dimension of $f$) as follows: 
$$
(E=\bigoplus_{i+j=n}R^if_{*}\Omega^j_{X/C}(\log B/D), \theta=\bigoplus_{i+j=n}\theta^{i,j}),
$$
where 
$$
\theta^{i,j}: R^if_{*}\Omega^j_{X/C}(\log B/D)\to R^{i+1}f_{*}\Omega^{j-1}_{X/C}(\log B/D)\otimes \Omega_{C}(D)
$$
is the logarithmic Kodaira-Spencer morphism. Set $E^{i,j}=R^if_{*}\Omega^{j}_{X/C}(\log B/D)$. The pair $(E,\theta)$ provides a basic example of graded logarithmic Higgs bundle over $C$. These logarithmic Kodaira-Spencer systems are special kinds of graded logarithmic Higgs bundles, as the following result shows:
\begin{prop}\label{polystable}
Let $(E,\theta)$ be a logarithmic Kodaira-Spencer system over $C$ as above. Then it is polystable of degree zero.
\end{prop}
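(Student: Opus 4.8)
The plan is to recognize $(E,\theta)$ as the Higgs bundle attached, via nonabelian Hodge theory, to the polarizable variation of Hodge structure carried by $R^nf_*\C|_U$, and then to read off polystability and vanishing of the degree from properties of that variation. Because $f$ is semistable, the local monodromies of $R^nf_*\C$ around the points of $D$ are unipotent; this is the feature that will make the parabolic structure trivial and keep all the numerical invariants honest.

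First I would set up the de Rham incarnation. Let $(H,\nabla)$ denote the Deligne canonical extension of the flat bundle $(R^nf_*\C|_U)\otimes\sO_U$, normalized so that the residues along $D$ have eigenvalues in $[0,1)$; by semistability the monodromy is unipotent, so these residues are in fact nilpotent. The Hodge filtration $F^\bullet$ extends to a filtration of $H$ by subbundles, and Griffiths transversality makes the associated graded $\mathrm{gr}_F H$ into a logarithmic Higgs bundle whose Higgs field is the induced logarithmic Kodaira--Spencer morphism. The comparison theorem for the Gauss--Manin connection identifies this with the system $(E,\theta)=(\bigoplus_{i+j=n}R^if_*\Omega^j_{X/C}(\log B/D),\bigoplus\theta^{i,j})$ in the statement; in particular $E=\mathrm{gr}_F H$ as a coherent sheaf.

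Next I would dispose of the degree. Since degree is additive along the filtration, $\deg E=\deg H$. For the Deligne extension one has $c_1(H)=-\sum_{x\in D}\tr(\mathrm{Res}_x\nabla)\,[x]$, and the residues are nilpotent by semistability, so each trace vanishes and $\deg E=\deg H=0$. As the Higgs field merely raises the grading it cannot change the total degree, so $(E,\theta)$ has degree zero.

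The heart of the argument --- and the step I expect to be the main obstacle --- is polystability. Here I would invoke the tame nonabelian Hodge correspondence on the open curve $U$, in the form established by Simpson (and by Mochizuki in general): with respect to the Hodge metric coming from the polarization, the harmonic bundle attached to $R^nf_*\C|_U$ produces exactly the Higgs bundle $\mathrm{gr}_F H=(E,\theta)$, and semisimple local systems correspond to polystable logarithmic Higgs bundles of parabolic degree zero. Since a polarizable variation of Hodge structure is semisimple as a local system (Deligne), and since unipotent monodromy forces all parabolic weights to vanish --- so that parabolic degree and parabolic (poly)stability coincide with their ordinary logarithmic counterparts --- the correspondence yields that $(E,\theta)$ is polystable of degree zero. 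Concretely, the semisimple decomposition of the local system into irreducibles matches a decomposition of $(E,\theta)$ into stable logarithmic Higgs bundles, each necessarily of slope zero. The delicate points to check are that the geometrically defined Kodaira--Spencer field agrees with the Higgs field produced by the harmonic metric, and that the unipotence of the monodromy indeed trivializes the parabolic structure; both are standard but must be stated carefully in order to align the several notions of stability in play.
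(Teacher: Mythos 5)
Your proof is correct and follows essentially the same route as the paper: both rest on Simpson's tame nonabelian Hodge correspondence for the VHS on the open curve, with unipotence of the local monodromy (from semistability) trivializing the parabolic structure so that parabolic polystability and degree zero become ordinary polystability and degree zero. Your write-up merely makes explicit the ingredients the paper leaves implicit (Deligne semisimplicity of polarizable VHS and the residue computation of $\deg H$).
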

\begin{proof}
This is well-known from the nonabelian Hodge theory over a quasi-projective smooth curve \cite{Si}. Let $f^0$ be the smooth part of $f$. Then the Hodge metric associated to the weight $n$ VHS attached to $f^0$ is \emph{tame harmonic}. By Landman's theorem, the local monodromies around $D$ are unipotent. It follows that the filtration structures are absent in the Simpson's correspondence (Main Theorem page 755 \cite{Si}).  So the statement follows.
\end{proof}
The result above has many deep implications in the geometry of fibrations. But by the transcendental nature of the method of establishing the Simpson's correspondence (which refers to the Main Theorem \cite{Si}), the lattice structure underlying the $\Z$-VHS associated to a family seems undetectable in the associated logarithmic Kodaira-Spencer system. In this note, we are trying to argue that some dynamical property associated to various mod $p$ reductions of the logarithmic Kodaira-Spencer system does bear information from the lattice structure. However, our finding into this issue is far from being definite. 

Now let $k$ be a perfect field of positive characteristic $p$. Let $X$ be a smooth variety over $k$ and $D\subset X$ a simple normal crossing divisor. Set $X_{\log}=(X,D)$ to be the logarithmic variety over $k$ (regarded as a log scheme with trivial log structure) whose log structure determined by the divisor $D$. Using the logarithmic generalization of the inverse Cartier transform of Ogus-Vologodsky due to D. Schepler (see also Appendix \cite{LSZ0}), one may generalize Definition 1.1 \cite{LSZ} in the straightforward manner:
\begin{definition} 
Let $X,D$ be as above. A \emph{periodic Higgs-de Rham flow} over $X_{\log}$ of period $f\in \N_{>0}$ is a diagram as follows:

$$
\xymatrix{
	& C^{-1}(E_0,\theta_0)\ar[dr]^{Gr_{Fil_0}} && C^{-1}(E_{f-1},\theta_{f-1})\ar[dr]^{Gr_{Fil_{f-1}}} \\
	(E_0,\theta_0) \ar[ur]^{C^{-1} } & & \cdots \ar[ur]^{C^{-1}}&& (E_f,\theta_f),\ar@/^2pc/[llll]^{\stackrel{\phi}{\cong} } }
$$

where

\begin{itemize}
	
	\item the initial term $(E_0,\theta_0)$ is a nilpotent logarithmic graded Higgs bundle with pole along $D$ and of level $\leq p-1$,
	
	\item for each $i\geq 0$ $Fil_i$ is a Hodge filtration on the logarithmic flat bundle $C^{-1}(E_i,\theta_i)$ making it into a de Rham bundle of level $\leq p-1$,
	
	\item $(E_i,\theta_i),i\geq 1$ is the associated logarithmic graded Higgs bundle to the logarithmic de Rham bundle $(C^{-1}(E_{i-1},\theta_{i-1}),Fil_{i-1})$,
	
	\item and $\phi$ is an isomorphism of logarithmic graded Higgs bundles.
	
\end{itemize}

\end{definition}
A logarithmic graded Higgs bundle over $X$ with pole along $D$ is said to be \emph{periodic} if it initializes a periodic Higgs-de Rham flow (of certain period $f$) over $X_{\log}$. We turn our attention back to the situation over $\C$. 
\begin{definition}
Let $X$ be a smooth variety over $\C$ and $D\subset X$ a SNCD. Set $X_{\log}=(X,D)$. Let $(E,\theta)$ be a graded Higgs bundle over $X_{\log}$ (namely a graded logarithmic Higgs bundle over $X$ with pole along $D$). A \emph{spread} of $(X,D,E,\theta)$ is a four-tuple $(\sX,\sD, \sE,\Theta)$ where $\sX$ is a scheme of finite type over $S=\Spec(A)$ with $A\subset \C$ a finitely generated $\Z$-subalgebra, $\sD$ is an $S$-relative divisor in $\sX$ and 
$$
\Theta: \sE\to \sE\otimes \Omega_{\sX/S}(\log \sD)
$$
an $S$-relative logarithmic Higgs bundle over $\sX$ such that there is an isomorphism of tuples over $\C$:
$$
(\sX,\sD,\sE,\Theta)\otimes _{A}\C\cong (X,D,E,\theta).
$$ 
\end{definition}
We give two lemmas which will be used below.
\begin{lemma}\label{black hole}
Let $k$ be an algebraically closed field of characteristic $p$. Let $C$ be a smooth projective curve over $k$ and $D\subset C$ a reduced effective divisor. If the initial term of a periodic Higgs-de Rham flow over $C_{\log}$ is stable, then any intermediate Higgs term of the periodic flow is also stable.
\end{lemma}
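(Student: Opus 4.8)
The plan is to run a Jordan--Hölder length count around the cyclic diagram, exploiting that the two building blocks of the flow---the inverse Cartier transform $C^{-1}$ and the grading functor $Gr_{Fil}$---both preserve the property of being semistable of degree zero. First I would record the categorical input coming from \cite{LSZ}: over a smooth projective curve, both $C^{-1}$ and $Gr_{Fil}$ send a semistable logarithmic Higgs (resp.\ de Rham) bundle of degree zero to a semistable object of degree zero, and $C^{-1}$ together with its inverse $C$ restricts to an equivalence between the abelian category $\mathcal{C}_{\mathrm{Higgs}}$ of semistable slope-zero graded Higgs bundles and the abelian category $\mathcal{C}_{\mathrm{dR}}$ of semistable slope-zero flat bundles (with nilpotent $p$-curvature, level $\leq p-1$). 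Since $(E_0,\theta_0)$ is stable it is in particular semistable of degree zero, and since each $(E_{i+1},\theta_{i+1})$ is obtained from $(E_i,\theta_i)$ by applying $C^{-1}$ and then $Gr_{Fil_i}$, an immediate induction places every intermediate term in $\mathcal{C}_{\mathrm{Higgs}}$.

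Next I would attach to an object $M$ of either abelian category its length $\ell(M)$, the number of Jordan--Hölder factors; here the simple objects are exactly the stable ones, so $M$ is stable if and only if $\ell(M)=1$, and $\ell$ is additive in short exact sequences. Because $C^{-1}$ is an equivalence of abelian categories it preserves simple objects and hence $\ell(C^{-1}(E_i,\theta_i))=\ell(E_i,\theta_i)$. The remaining point is that grading never decreases length, i.e.\ $\ell(Gr_{Fil}(V,\nabla))\geq \ell(V,\nabla)$ for $(V,\nabla)$ in $\mathcal{C}_{\mathrm{dR}}$. For this I would choose a Jordan--Hölder filtration of $(V,\nabla)$ by saturated sub-flat-bundles, equip each step with the induced Hodge filtration, and observe that the associated graded functor is exact for these strict filtrations; this yields a filtration of $Gr_{Fil}(V,\nabla)$ by Higgs subsheaves whose successive quotients are the gradings of the stable factors of $(V,\nabla)$, each a nonzero semistable slope-zero Higgs sheaf and hence of length $\geq 1$. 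Summing over the factors gives the inequality.

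Finally I would combine these two facts along the period-$f$ diagram. Applying $C^{-1}$ preserves length and applying $Gr_{Fil}$ does not decrease it, so $\ell(E_0,\theta_0)\leq \ell(E_1,\theta_1)\leq\cdots\leq \ell(E_f,\theta_f)$, while the isomorphism $\phi$ forces $\ell(E_f,\theta_f)=\ell(E_0,\theta_0)$. Hence every inequality is an equality, and $\ell(E_i,\theta_i)=\ell(E_0,\theta_0)=1$ for all $i$, which is exactly the stability of each intermediate Higgs term.

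The main obstacle I anticipate is the grading step: verifying that $Gr_{Fil}$ is exact with respect to the induced filtrations on the Jordan--Hölder pieces, so that the grading of a saturated sub-flat-bundle with its induced Hodge filtration is genuinely a Higgs subsheaf of $Gr_{Fil}(V,\nabla)$ with the expected quotient. One must also keep track of the normalization of degrees under $C^{-1}$ arising from the Frobenius twist, which is harmless here only because we work throughout with degree-zero objects.
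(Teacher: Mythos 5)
Your proposal is correct and follows essentially the same route as the paper: both run a Jordan--H\"older length count around the cyclic diagram, using that $C^{-1}$ preserves the number of stable factors and $Gr_{Fil}$ does not decrease it, and conclude from $\ell(E_f)=\ell(E_0)=1$ that every intermediate term is stable. The one step you flag as an obstacle --- that the graded pieces of the induced filtrations are genuine subobjects --- is resolved in the paper by observing that the resulting Higgs subsheaves have degree zero, so their saturations are Higgs-invariant of degree $\leq 0$ by semistability and the filtration is therefore one by degree-zero Higgs subbundles.
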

\begin{proof}
For a given semistable logarithmic Higgs bundle over $C_{\log}$, its Jordan-H\"{o}lder filtration may not be unique, but different Jordan-H\"{o}lder filtrations have the same set of stable factors. Let $\{E_0,\cdots,E_{f-1}\}$ be the intermediate Higgs terms of an $f$-periodic Higgs-de Rham flow with $E_f\cong E_0$. Set $n_i$ to be the number of stable factors of a Jordan-H\"{o}lder filtration on $E_i$. We claim that $n_i\leq n_{i+1}$.  Indeed, we are going to show that the flow operator $Gr_{Fil}\circ C^{-1}$ maps a Jordan-H\"{o}lder filtration on $E_i$ to a filtration of semistable subbundles on $E_{i+1}$ whose length equals $n_i$, from which the claim follows. By Proposition 6.3 \cite{LSZ}, each $E_i$ is of degree zero. So each member in a Jordan-H\"{o}lder filtration of $E_i$ is of degree zero too. Because the operator $C^{-1}$ is exact and multiplies the degree by $p$, one obtains consequently a filtration on $C^{-1}(E_i)$ of flat subbundles of degree zero. As $Gr_{Fil}$ preserves the degree, using the induced filtrations and take the associated gradings, one obtains a filtration of the same length by Higgs subsheaves of degree zero on $Gr_{Fil}\circ C^{-1}(E_i)=E_{i+1}$. It is actually a filtration of Higgs \emph{subbundles}. This is because the saturation of each Higgs subsheave is invariant under the Higgs field and hence of degree $\leq 0$ by semistability. The claim is proved. It follows that
$$
	n_0\leq n_1\leq\cdots\leq n_f=n_0,
$$
hence all the $n_i$s are equal and the flow operator maps a Jordan-H\"{o}lder filtration of $E_i$ to a Jordan-H\"{o}lder filtration of $E_{i+1}$. It follows immediately that, if $E_0$ is stable, then each intermediate Higgs term is also stable. This concludes the proof.
\end{proof}

\begin{lemma}\label{uniqueness on filtration}
	Let $C,D$ be as Lemma \ref{black hole}. Let $(V,\nabla)$ be a flat bundle over $C_{\log}$. Then up to a shift of indices, there is at most one Hodge filtration $Fil$ on $(V,\nabla)$ such that the graded Higgs bundle $Gr_{Fil}(V,\nabla)$ is stable.  
\end{lemma}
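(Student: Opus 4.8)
The plan is to show that any two Hodge filtrations with stable associated graded must coincide, by comparing them through their common bigraded object and playing the two stability conditions against each other.

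First I would set up the bifiltration. Suppose $F^{\bullet}$ and $G^{\bullet}$ are two Hodge filtrations on $(V,\nabla)$, both of whose associated graded Higgs bundles $(Gr_F V,\theta_F)$ and $(Gr_G V,\theta_G)$ are stable. Consider the bigraded sheaf $H^{p,q}=Gr_G^q Gr_F^p(V)$, which by the Zassenhaus lemma is canonically isomorphic to $Gr_F^p Gr_G^q(V)$; over the smooth curve $C$ each $H^{p,q}$ is locally free. The crucial point is that Griffiths transversality holds for \emph{both} filtrations, $\nabla(F^p)\subseteq F^{p-1}\otimes\Omega_C(\log D)$ and $\nabla(G^q)\subseteq G^{q-1}\otimes\Omega_C(\log D)$, so that $\nabla(F^p\cap G^q)\subseteq (F^{p-1}\cap G^{q-1})\otimes\Omega_C(\log D)$. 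Hence $\nabla$ descends to a single Higgs field $\theta\colon H^{p,q}\to H^{p-1,q-1}\otimes\Omega_C(\log D)$ on $H=\bigoplus_{p,q}H^{p,q}$, and this $\theta$ simultaneously computes the $G$-graded of $\theta_F$ and the $F$-graded of $\theta_G$.

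Next I would exploit the numerology. Since $\theta$ lowers both $p$ and $q$ by one, it preserves the difference $p-q$; hence $H=\bigoplus_d H_{\{d\}}$ with $H_{\{d\}}=\bigoplus_{p-q=d}H^{p,q}$ is a direct sum decomposition into Higgs subbundles. I would then produce, on the stable bundle $Gr_F V$, the descending filtration $S_c$ by the Higgs subsheaves cut out by the condition $q-p\geq c$; the transversality computation shows each $S_c$ is $\theta_F$-invariant and $S_c/S_{c+1}\cong H_{\{-c\}}$. Because associated grading preserves degrees, all of $Gr_F V$, $Gr_G V$ and $H$ share the slope $\mu_0=\mu(V)$ (with $\mu=\deg/\rank$). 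Writing $d_{\min},d_{\max}$ for the extreme indices with $H_{\{d\}}\neq 0$, stability of $Gr_F V$ applied to the smallest nonzero subsheaf and to the top quotient of $S_\bullet$ gives $\mu(H_{\{d_{\min}\}})<\mu_0<\mu(H_{\{d_{\max}\}})$. Running the mirror construction $S'_c$ on $Gr_G V$ (cut out by $p-q\geq c$) reverses the roles of the two ends, and stability of $Gr_G V$ yields $\mu(H_{\{d_{\max}\}})<\mu_0<\mu(H_{\{d_{\min}\}})$. If $d_{\min}<d_{\max}$ these two chains of inequalities contradict each other, so necessarily $d_{\min}=d_{\max}=:d_0$ and $H$ is concentrated on the single diagonal $p-q=d_0$.

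Finally, diagonal concentration means $Gr_G^q Gr_F^p(V)=0$ whenever $q\neq p-d_0$; equivalently, after the shift $G'^{p}:=G^{p-d_0}$, the bigraded of $(F,G')$ is concentrated on $p=q$. The two boundary conditions of this concentration, namely $F^p\subseteq G'^p+F^{p+1}$ and $G'^{p+1}\cap F^p\subseteq F^{p+1}$, yield by a short descending induction (using $F^{p+N}=0$ for $N\gg 0$) the inclusion $F^p\subseteq G'^p$, and the symmetric statement gives the reverse inclusion, whence $F^p=G^{p-d_0}$ for all $p$. Thus the two filtrations agree up to the shift by $d_0$, as claimed. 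I expect the main obstacle to be the second step: verifying that the subsheaves $S_c$ and $S'_c$ are genuinely Higgs-invariant and keeping the slope bookkeeping of the two opposite orderings straight, so that the two stability hypotheses collide precisely on the extreme pieces $H_{\{d_{\max}\}}$ and $H_{\{d_{\min}\}}$. The concluding linear-algebra step is routine.
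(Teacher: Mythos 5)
Your argument is correct, and it is worth noting that the paper itself gives no argument for this lemma at all: its ``proof'' consists of the remark that the divisor $D$ plays no role and a citation of Lemma 4.1 of [LSZ]. You have therefore supplied a complete, self-contained proof of a statement the paper outsources. The strategy --- form the bigraded object $H^{p,q}=Gr_G^qGr_F^pV$, use Griffiths transversality for \emph{both} filtrations to descend $\nabla$ to a Higgs field lowering $p$ and $q$ simultaneously, and then play the two stability hypotheses against each other on the extreme diagonals of the induced filtrations $S_\bullet$ and $S'_\bullet$ to force concentration on a single diagonal $p-q=d_0$ --- is exactly the standard mechanism behind such uniqueness results, and your slope bookkeeping (smallest subobject of a stable object has slope $<\mu_0$, top quotient has slope $>\mu_0$, with the two orderings reversed on $Gr_FV$ versus $Gr_GV$) is carried out correctly; the concluding descending induction from $F^p\subseteq G'^p+F^{p+1}$ is also fine. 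One small inaccuracy: the assertion that each $H^{p,q}$ is locally free is not automatic, since a quotient of one locally free subsheaf by another on a curve can acquire torsion. This is harmless for your argument, because the only pieces whose slopes you actually use are the extreme diagonals $H_{\{d_{\min}\}}$ and $H_{\{d_{\max}\}}$; each of these is (by the Zassenhaus symmetry) realized as the \emph{smallest} step of one of the two induced filtrations, hence is a subsheaf of a locally free sheaf on a smooth curve and so is locally free of positive rank, and the degree additivity used for the quotient inequality holds for arbitrary coherent sheaves. You might add a sentence to this effect, but no essential idea is missing.
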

\begin{proof}
The proof does not make difference for $D$ empty or not. For $D$ empty, see Lemma 4.1 \cite{LSZ}. 
\end{proof}
We introduce the following definition.
\begin{definition}\label{periodic definition}
	Let $(X,D)$ be a log pair over $\C$. A graded Higgs bundle $(E,\theta)$ over $X_{\log}$ is called \emph{periodic} if there exists a spread $$(\sX,\sD, \sE,\Theta)\rightarrow S,$$ of $(X,D,E,\theta)$, a positive integer $f$ and a closed subset $Z\subset S$ which has finite image in $\Spec (\Z)$ such that for all geometric points $s\in S-Z$, the reduction $(\sE_s,\Theta_s)$ at $s$ is periodic of period $\leq f$ for \emph{all} $W_2(k(s))$-lifting $\tilde s\to S-Z$. The \emph{period} of a periodic Higgs bundle is defined to be the smallest $f\geq 1$ among all possible spreads.  
\end{definition}
The following result is immediate by the theory of spread.
\begin{prop}
Let $X,D$ and $(E,\theta)$ be as in Definition \ref{periodic definition}. Then if $(E,\theta)$ is periodic with respect to one spread, then it is periodic for any spread. Hence, periodicity of $(E,\theta)$ is an intrinsic property over $\C$. 
\end{prop}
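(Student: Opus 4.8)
The plan is to compare two arbitrary spreads by passing to a common finitely generated subalgebra of $\C$, spreading out the comparison isomorphism there, and then transporting periodicity in both directions. Concretely, let $(\sX_1,\sD_1,\sE_1,\Theta_1)\to S_1=\Spec A_1$ be a spread for which $(E,\theta)$ is periodic of period $\le f$ with exceptional set $Z_1\subset S_1$, and let $(\sX_2,\sD_2,\sE_2,\Theta_2)\to S_2=\Spec A_2$ be an arbitrary second spread, where $A_1,A_2\subset\C$ are finitely generated $\Z$-subalgebras. Let $A_3\subset\C$ be the finitely generated $\Z$-subalgebra generated by $A_1$ and $A_2$, set $S_3=\Spec A_3$, and let $p_i\colon S_3\to S_i$ be the two (dominant) structure maps. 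Composing the two isomorphisms furnished by the definition of a spread yields an isomorphism of the $\C$-fibres $p_1^{*}(\sX_1,\sD_1,\sE_1,\Theta_1)\otimes_{A_3}\C\cong p_2^{*}(\sX_2,\sD_2,\sE_2,\Theta_2)\otimes_{A_3}\C$; by the standard limit/spreading-out formalism this isomorphism is already defined over a dense open $S_3^{\circ}\subseteq S_3$. Thus over $S_3^{\circ}$ the two pulled-back spreads are isomorphic as relative logarithmic Higgs bundles on a common ambient log-scheme.

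The first, easy, step is to observe that pullback preserves periodicity: if a spread over $S$ is periodic of period $\le f$ with exceptional set $Z$ and $g\colon S'\to S$ is \emph{any} morphism of finite-type $\Z$-schemes, then the pulled-back spread over $S'$ is periodic of period $\le f$ with exceptional set $g^{-1}(Z)$. Indeed, a geometric point $\bar s'\colon\Spec\Omega\to S'$ maps to a geometric point $\bar s=g\circ\bar s'$ over the \emph{same} algebraically closed field $\Omega$; the geometric fibres $(\sE'_{\bar s'},\Theta'_{\bar s'})$ and $(\sE_{\bar s},\Theta_{\bar s})$ coincide, and every $W_2(\Omega)$-lifting $\tilde s'\to S'-g^{-1}(Z)$ pushes forward to a $W_2(\Omega)$-lifting $g\circ\tilde s'\to S-Z$ carrying the identical $W_2$-lift of the fibre. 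Since the inverse Cartier transform of Ogus--Vologodsky and Schepler, the grading operators $Gr_{Fil}$, and the periodicity isomorphism $\phi$ all commute with this base change \cite{LSZ}, the whole Higgs--de Rham flow and its period are preserved, and $g^{-1}(Z)$ again has finite image in $\Spec\Z$ because $g$ is compatible with the projections to $\Spec\Z$. Applying this with $g=p_1$ and restricting to $S_3^{\circ}$ shows that $p_1^{*}(\sE_1,\Theta_1)$, and hence via the isomorphism also $p_2^{*}(\sE_2,\Theta_2)$, is periodic of period $\le f$ over $S_3^{\circ}$.

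The remaining step is to descend periodicity along $p_2$ from $S_3^{\circ}$ to $S_2$, and this is where I expect the only real work to lie. Since $p_2$ is dominant its image contains a dense open $V\subseteq S_2$, and after shrinking $S_3^{\circ}$ so that $p_2\colon S_3^{\circ}\to V$ is smooth and surjective, every $W_2(\Omega)$-lifting of a geometric point of $V$ lifts to one in $S_3^{\circ}$; transporting periodicity across the isomorphism then gives period $\le f$ at each geometric point of $V$ outside $p_2(p_1^{-1}(Z_1))$. The natural candidate is therefore $Z_2=(S_2\setminus V)\cup p_2(p_1^{-1}(Z_1))$, whose second summand has finite image in $\Spec\Z$ because $Z_1$ does. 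The main obstacle is to establish the same for $S_2\setminus V$, namely that the locus over which the two spreads \emph{fail} to be identified is supported over finitely many primes; a priori the complement of $V$ may contain horizontal components dominating $\Spec\Z$, which cannot be absorbed into $Z_2$. I would attack this by a codimension-one analysis, extending the comparison isomorphism over the discrete valuation rings attached to the prime divisors of $S_2$ and arguing that the points of genuine indeterminacy are, up to a vertical set, finite in number, so that $S_2\setminus V$ has finite image in $\Spec\Z$. Controlling this horizontal indeterminacy locus is the crux; once it is settled, the conditions of Definition \ref{periodic definition} are verified for the arbitrary second spread, and the intrinsic nature of periodicity over $\C$ follows.
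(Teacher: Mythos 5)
Your setup is the right one and, for what it is worth, goes further than the paper itself, which offers no argument beyond the assertion that the statement is ``immediate by the theory of spread.'' The first two steps are sound: the comparison isomorphism between the two spreads descends to a dense open $S_3^{\circ}$ of the common refinement $S_3=\Spec(A_3)$, and periodicity does pull back along any morphism of bases, since a geometric point and a $W_2$-lifting upstairs push forward to ones downstairs with the same fibre and the same lifted log pair, while the preimage of a vertical closed set stays vertical. The problem is the step you yourself single out, and it is a genuine gap, not a routine verification you ran out of room for. Definition \ref{periodic definition} demands the periodicity condition at \emph{every} geometric point of $S_2-Z_2$ with $Z_2$ having finite image in $\Spec(\Z)$, i.e.\ only vertical loci may be discarded; but the locus of $S_2$ not reached by $p_2(S_3^{\circ})$ can perfectly well contain horizontal components. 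Already for $A_2=\Z[\lambda]$ with $\lambda$ transcendental, if the comparison isomorphism forces $A_3$ to contain $(\lambda-2)^{-1}$, then $p_2$ misses the entire divisor $\{\lambda=2\}\cong\Spec(\Z)$, which dominates $\Spec(\Z)$ and so cannot be absorbed into $Z_2$. At the geometric points of such a component the fibre of the second spread is simply not identified with any fibre of the first, so transport along the isomorphism says nothing about them, and the quantifier over all $W_2$-liftings there (compare Proposition \ref{condition for periodicity in char p}: for fixed $\lambda_0$ only finitely many liftings $\lambda_1$ are periodic) shows this is not a condition one gets for free.

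Your closing ``codimension-one analysis'' is a statement of intent rather than an argument: extending the comparison isomorphism over the discrete valuation rings at horizontal prime divisors of $S_2$ is exactly what fails in the example above, since $(\lambda-2)^{-1}$ has a pole along that divisor. To close the gap one needs a genuinely new input --- for instance a specialization or semicontinuity statement to the effect that the set of geometric points at which the fibre is periodic of period $\le f$ for all admissible $W_2$-liftings is stable under specialization, so that containing a dense open forces it to contain everything outside a vertical locus; or a strengthening of the notion of spread guaranteeing that any two spreads become isomorphic away from a vertical locus. Neither is supplied in your proposal, nor, it should be said, in the paper, whose ``proof'' consists of the single sentence preceding the statement.
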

We list several simple properties of periodic Higgs bundles.
\begin{lemma}\label{basic properties of periodic Higgs bundles}
The following statements about periodic Higgs bundles hold:
\begin{itemize}
	\item [(i)] A periodic Higgs bundle is semistable of degree zero.
	\item [(ii)] A direct sum of periodic Higgs bundles is again periodic.
	\item [(iii)] A graded Higgs bundle obtained by renumbering the graded structure of a periodic Higgs bundle is again periodic.
\end{itemize}
\end{lemma}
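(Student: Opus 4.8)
The plan is to reduce all three assertions to their counterparts in positive characteristic, which are either recorded in \cite{LSZ} or established by the same slope-bookkeeping already used in Lemma \ref{black hole}, and then to transport them back to $\C$ through the theory of spreads. For (i) I would first prove the characteristic-$p$ statement: if a graded Higgs bundle initializes an $f$-periodic Higgs-de Rham flow with intermediate terms $E_0,\dots,E_{f-1}$ and $E_f\cong E_0$ over $C_{\log}$, then each $E_i$ is semistable of degree zero. Degree zero is Proposition 6.3 \cite{LSZ}. For semistability, let $F\subseteq E_i$ be the maximal destabilizing Higgs subsheaf; since $\deg E_i=0$ one has $\mu_{\max}(E_i)\geq 0$. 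Running $F$ through the flow operator $Gr_{Fil}\circ C^{-1}$ exactly as in Lemma \ref{black hole} --- $C^{-1}$ is exact and multiplies degrees by $p$, while $Gr_{Fil}$ preserves both degree and rank --- yields a Higgs subsheaf of $E_{i+1}$ of slope $p\,\mu(F)$, so that $\mu_{\max}(E_{i+1})\geq p\,\mu_{\max}(E_i)$. Iterating around the cycle gives $\mu_{\max}(E_0)\geq p^{f}\mu_{\max}(E_0)$, which forces $\mu_{\max}(E_0)=0$, i.e. semistability.

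To descend (i) to $\C$, I take a spread $(\sX,\sD,\sE,\Theta)\to S$ as in Definition \ref{periodic definition}. The degree of $\sE$ on the fibres of $\sX/S$ is locally constant, so degree zero at the generic (complex) fibre follows from degree zero at the mod-$p$ reductions. If $(E,\theta)$ over $\C$ were not semistable, a destabilizing Higgs subsheaf $F\subseteq E$ of positive slope spreads out to a $\Theta$-invariant subsheaf $\sF\subseteq\sE$ over a dense open of $S$; for a general geometric point $s\notin Z$ the fibre $\sF_s\subseteq\sE_s$ is a Higgs subsheaf of the same positive slope, contradicting the semistability of the periodic reduction $\sE_s$ just established.

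For (ii) and (iii) the arguments are formal consequences of the compatibility of the flow with direct sums and with index shifts. In (ii), the inverse Cartier transform $C^{-1}$ (for any fixed $W_2$-lifting), the formation of Hodge filtrations, and the functor $Gr_{Fil}$ all commute with direct sums, so choosing at each stage the direct-sum filtration exhibits the flow of $(E\oplus E',\theta\oplus\theta')$ as the direct sum of the two given flows; it closes up after $\mathrm{lcm}(f,f')$ steps with the direct-sum isomorphism. Spreading both summands over a common finitely generated $A\subset\C$ --- enlarging $A$ so that the single spread of $(X,D)$ carries both Higgs structures --- and deleting the union of the two bad loci, which still has finite image in $\Spec(\Z)$, gives periodicity over $\C$. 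In (iii), renumbering the grading by a constant shift leaves the underlying Higgs bundle, hence $C^{-1}(E_0,\theta_0)$, unchanged and merely relabels the graded pieces; in particular the level is unaffected, so the hypothesis of level $\leq p-1$ persists. By Lemma \ref{uniqueness on filtration} the admissible Hodge filtration is unique up to exactly such a shift, so the whole flow is relabeled by the same shift at every stage and the isomorphism $\phi$ remains an isomorphism of graded Higgs bundles; spreading out then gives the complex statement.

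I expect the main obstacle to be part (i): one must isolate the slope-growth argument in characteristic $p$, which is only implicit in Lemma \ref{black hole} and there used solely for stable initial terms, and then verify that a hypothetical complex destabilizer genuinely specializes to a destabilizer of the periodic reductions with its slope intact. Parts (ii) and (iii) should follow routinely once the stated compatibilities of the Higgs-de Rham flow with direct sums and with index shifts are recorded.
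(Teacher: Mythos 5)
Your argument is correct and follows essentially the same route as the paper, whose own proof is a three-line remark: (i) is delegated to Proposition 6.3 of \cite{LSZ} (whose content is precisely your slope-multiplication argument in characteristic $p$, with the descent to $\C$ via spreads left implicit), (ii) is declared obvious, and (iii) is the observation that the inverse Cartier transform ignores the grading so that one only adjusts the indices of the last Hodge filtration. The one quibble is that in (iii) your appeal to Lemma \ref{uniqueness on filtration} is both unnecessary (periodicity is an existential condition, so you may simply reuse the given filtrations with shifted indices) and not strictly applicable without a stability hypothesis on the graded terms.
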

\begin{proof}
(i) follows from Proposition 6.3 \cite{LSZ}. (ii) is obvious. As for (iii), one notices that the inverse Cartier transform ignores the grading structure. So in char $p$, if it is periodic for one grading structure, then for another grading structure (as long as its the largest grading is $\leq p-1$) one may simply adjust the indices of the last Hodge filtration to make it periodic as well.
\end{proof}
The main result of this section is the following periodicity result.    
\begin{theorem}\label{periodicity theorem}
Let $C$ be a smooth projective curve and $D\subset C$ a reduced effective divisor. Let $f: X\to C$ be a semistable family and $(E,\theta)$ be a logarithmic Kodaira-Spencer system associated to $f$. Then any graded Higgs subbundle in $(E,\theta)$ of degree zero is periodic in the sense of Definition \ref{periodic definition}.
\end{theorem}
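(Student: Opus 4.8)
The plan is to establish periodicity for the whole system geometrically, and then descend to degree-zero graded Higgs subbundles by decomposing into stable factors. By Proposition \ref{polystable}, $(E,\theta)$ is polystable of degree zero, hence a semisimple object in the abelian category of semistable logarithmic Higgs bundles of slope zero; I would write $(E,\theta)=\bigoplus_j (G_j,\theta_j)^{\oplus m_j}$ with $(G_j,\theta_j)$ pairwise non-isomorphic stable of degree zero. Any degree-zero graded Higgs subbundle $(F,\theta|_F)$ is a subobject of slope zero of this semisimple object, hence a direct summand, so $(F,\theta|_F)\cong\bigoplus_j (G_j,\theta_j)^{\oplus m_j'}$ with $m_j'\le m_j$; each $G_j$ is again graded, being the Higgs bundle of an irreducible sub-variation under Simpson's correspondence. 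By Lemma \ref{basic properties of periodic Higgs bundles}(ii) it then suffices to prove that each stable factor $(G_j,\theta_j)$ is periodic.

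First I would spread out $f$ together with the decomposition over a base $S=\Spec A$, shrinking $S$ so that every $\sG_j$ is fibrewise stable of degree zero, and removing from $S$ a closed set $Z$ over finitely many primes so that $p=\mathrm{char}\,k(s)$ exceeds the weight $n$. For $s\in S-Z$ and a $W_2(k(s))$-lifting $\tilde s$, the relative logarithmic de Rham bundle $(\sH_s,\nabla_s)$ carries the Gauss–Manin connection, whose $p$-curvature is nilpotent by Katz, and its Hodge filtration $Fil$ satisfies $Gr_{Fil}(\sH_s,\nabla_s)=(\sE_s,\Theta_s)$ by the very definition of the Kodaira–Spencer system. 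The logarithmic comparison of Ogus–Vologodsky, in Schepler's form, should then yield $C^{-1}(\sE_s,\Theta_s)\cong(\sH_s,\nabla_s)$; taking $Fil$ as the Hodge filtration exhibits $(\sE_s,\Theta_s)$ as the initial term of a Higgs–de Rham flow that closes up after one step, so the whole system is periodic of period one for every lifting $\tilde s$.

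It then remains to cut the stable factors out of this period-one flow. I would fix a Jordan–Hölder filtration of $\sE_s=\sE_0$; since $\sE_0$ is polystable it splits as the direct sum of its stable factors $Q^0_a$. Running the flow operator $T=Gr_{Fil}\circ C^{-1}$ and reusing the argument of Lemma \ref{black hole}, $T$ carries this filtration to a filtration of $\sE_1$, and because the number of stable factors is constant along a periodic flow the image filtration is again Jordan–Hölder; hence each graded piece $Q^1_a$, namely $Gr$ of $C^{-1}(Q^0_a)$ for the restricted filtration, is stable of degree zero, and that restricted filtration is a genuine Hodge filtration which by Lemma \ref{uniqueness on filtration} is the unique stable-making one up to a shift. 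Iterating, each strand $Q^0_a,Q^1_a,\dots$ is itself a Higgs–de Rham flow through stable Higgs bundles. The closing isomorphism $\phi\colon\sE_1\xrightarrow{\cong}\sE_0$ induces a permutation $\sigma$ of the finite set of stable factors with $Q^1_a\cong Q^0_{\sigma(a)}$; propagating this through the flow gives $Q^m_a\cong Q^0_{\sigma^m(a)}$, so after $r=\mathrm{ord}(\sigma)$ steps the strand returns to $Q^0_a$ up to isomorphism. This produces a periodic Higgs–de Rham flow for each stable factor of period dividing $r\le (\rank E)!$, uniformly in $s$ and in $\tilde s$. As every $G_j$ occurs among the $Q^0_a$, each $(G_j,\theta_j)$ is periodic, and the theorem follows.

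The main obstacle I expect is the geometric input of the second paragraph: checking that the logarithmic inverse Cartier transform recovers the Gauss–Manin bundle from the Kodaira–Spencer system for \emph{every} $W_2$-lifting of the base, not merely for a lift that happens to extend to the total space. This is where the crystalline, lift-independent nature of relative de Rham cohomology, together with Schepler's logarithmic refinement of Ogus–Vologodsky and the nilpotence of the $p$-curvature, must be invoked with care. Once the period-one flow is in hand for all liftings, the descent to stable factors through Lemmas \ref{black hole} and \ref{uniqueness on filtration} is essentially formal.
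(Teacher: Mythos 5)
Your proposal is correct and follows essentially the same route as the paper: polystable decomposition into stable factors, spreading out, one-periodicity of the full Kodaira--Spencer system in characteristic $p$ from its geometric origin, and then the permutation argument on stable factors via Lemmas \ref{black hole} and \ref{uniqueness on filtration}. The geometric input you flag as the main obstacle is exactly what the paper disposes of by citing Theorem 6.2 of \cite{Fa89} together with Proposition 4.1 of \cite{LSZ0}, which give one-periodicity with respect to \emph{any} $W_2(k(s))$-lifting.
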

\begin{proof}
We take a spread of the family $f$ as follows. By the standard argument (\cite{EGA IV} 8, 11.2, 17.7), there exists a sub $\Z$-algebra $A\subset \C$ of finite type and a semistable family $\mathfrak{f}: (\mathfrak X, \mathfrak {B})\to (\mathfrak C, \mathfrak D)$ defined over $S=\Spec(A)$ such that $S$ is integral and regular, $\alpha: \mathfrak C\to S$ is smooth and projective, and $f$ is the base change of $\mathfrak f$ via $\Spec(\C)\to S$. Shrinking $S$ if necessary, we may assume that for any closed point $s\in S$,
$$\mathrm{char}(k(s))>N:=\mathrm{rank}(E)+n+d+1.
$$
By Deligne-Illusie \cite{DI}, Illusie \cite{IL90}, for any $i,j$, $$R^{j}\mathfrak{f}_{*}\Omega^{i}_{\mathfrak{X}/\mathfrak{C}}(\log \mathfrak{B}/\mathfrak{D}),~ R^{i+j}\mathfrak{f}_{*}\Omega^{*}_{\mathfrak{X}/\mathfrak{C}}(\log \mathfrak{B}/\mathfrak{D})$$ are locally free of finite type, and the spectral sequence
\begin{eqnarray*}\label{int E1}E^{i,j}_1=R^{j}\mathfrak{f}_{*}\Omega^{i}_{\mathfrak{X}/\mathfrak{C}}(\log \mathfrak{B}/\mathfrak{D})\Rightarrow R^{i+j}\mathfrak{f}_{*}\Omega^{*}_{\mathfrak{X}/\mathfrak{C}}(\log \mathfrak{B}/\mathfrak{D})\end{eqnarray*}
degenerates at $E_1$. It yields the logarithmic Kodaira-Spencer system associated to $\mathfrak f$. 
$$
\Theta^{i,j}: R^{j}\mathfrak{f}_{*}\Omega^{i}_{\mathfrak{X}/\mathfrak{C}}(\log \mathfrak{B}/\mathfrak{D})\to R^{j+1}\mathfrak{f}_{*}\Omega^{i-1}_{\mathfrak{X}/\mathfrak{C}}(\log \mathfrak{B}/\mathfrak{D})\otimes \Omega_{\mathfrak{C}/S}(\log \mathfrak D)\footnote{Another way to obtain $\Theta^{i,j}$ is obtained by taking a suitable edge morphism associated to the higher direct image of the following short exact sequence:
	$$
	0\to \mathfrak{f}^*\Omega_{\mathfrak{C}/S}(\log \mathfrak D)\otimes \Omega^{i-1}_{\mathfrak{X}/\mathfrak{C}}(\log \mathfrak{B}/\mathfrak{D})\to \Omega^{i}_{\mathfrak{X}}(\log \mathfrak{B})\to \Omega^{i}_{\mathfrak{X}/\mathfrak{C}}(\log \mathfrak{B}/\mathfrak{D})\to 0.
	$$}.
$$
Set $\sE^{i,j}=R^{j}\mathfrak{f}_{*}\Omega^{i}_{\mathfrak{X}/\mathfrak{C}}(\log \mathfrak{B}/\mathfrak{D})$. Thus $\sE=\bigoplus_{i+j=n}\sE^{i,j},\Theta=\bigoplus_{i+j=n}\Theta^{i,j}$ is a spread of $(E,\theta)$. By construction, for any geometrically closed point $s\in S$, the base change $(\sE_s,\Theta_s)$ is the logarithmic Kodaira-Spencer system associated to the family $\mathfrak f_s$, the fiber of $\mathfrak f$ at $s$. By Theorem 6.2 \cite{Fa89} and Proposition 4.1 \cite{LSZ0}, $(\sE_s,\Theta_s)$ is one periodic with respect to any $W_2(k(s))$-lifting $\tilde s: \Spec(W_2(k(s)))\to S$.

By Proposition \ref{polystable}, we may write $(E,\theta)$ into direct sum of stable factors:
$$
(E,\theta)=\bigoplus_{i=1}^{l}(E_i,\theta_i)^{\oplus m_i}
$$
with $(E_i,\theta_i)$ stable. Caution: the above equality is meant to be an equality of Higgs bundles, instead of graded Higgs bundles over $C_{\log}$; for different $i$ and $j$, $(E_i,\theta_i)$ are not isomorphic to $(E_j,\theta_j)$ as Higgs bundles.    

It suffices to show that $(E_i,\theta_i)$ is periodic for each $i$. This is because any graded Higgs subbundle of degree zero in $(E,\theta)$ must be a direct sum of stable factors $(E_i,\theta_i)$s, up to renumbering the graded structure. Then one applies Lemma \ref{basic properties of periodic Higgs bundles}. As the geometric stability is an open condition, shrinking $S$ if necessary, we may assume that 
$$
(\sE,\Theta)=\bigoplus_{i=1}^{l}(\sE_i,\Theta_i)^{\oplus m_i}
$$
such that for each $i$, $(\sE_i,\Theta_i)$ is a spread of $(E_i,\theta_i)$ and its base change at $s$ is still stable. So we obtain the following decomposition into stable factors: 
$$
(\sE_s,\Theta_s)=\bigoplus_{i=1}^{l}(\sE_{i,s},\Theta_{i,s})^{\oplus m_i}.
$$
Let $Fil$ denote the Hodge filtration of the family in consideration. Then the one-periodicity of $(E,\theta)$ means an isomorphism:
$$
Gr_{Fil}\circ C^{-1}(\sE_s,\Theta_s)\cong (\sE_s,\Theta_s).
$$
We argue that the operator $Gr_{Fil}\circ C^{-1}$ induces a self-map the set $T=\{1,\cdots,l\}$ which represents the set of non-isomorphic stable factors in $(\sE_s,\Theta_s)$. Pick any $i\in T$. By Lemma \ref{black hole}, $Gr_{Fil}\circ C^{-1}(\sE_{i,s},\Theta_{i,s})$ is stable. Therefore, there is a unique $j(i)$ such that 
$$
Gr_{Fil}\circ C^{-1}(\sE_{i,s},\Theta_{i,s})\cong (\sE_{j(i),s}, \Theta_{j(i),s})
$$ 
as logarithmic Higgs bundles. However, when $m_i\geq 2$, three are more than one factor isomorphic to $(\sE_{i,s},\Theta_{i,s})$. We claim that $j(i)$ does not depend on this ambiguity. Let $(\sE_{i,s},\Theta_{i,s})_1\cong (\sE_{i,s},\Theta_{i,s})_2$ be two stable factors of $(\sE_s,\Theta_s)$ which are isomorphic as logarithmic Higgs bundles. Since $C^{-1}$ is an equivalence of categories,  
$$
C^{-1}(\sE_{i,s},\Theta_{i,s})_1\cong C^{-1}(\sE_{i,s},\Theta_{i,s})_2.
$$  
as logarithmic flat bundles. As $Gr_{Fil}C^{-1}(\sE_{i,s},\Theta_{i,s})_i, i=1,2$ is stable, it follows from Lemma \ref{uniqueness on filtration} that there is an isomorphism of logarithmic Higgs bundles:
$$
Gr_{Fil}\circ C^{-1}(\sE_{i,s},\Theta_{i,s})_1\cong Gr_{Fil}\circ C^{-1}(\sE_{i,s},\Theta_{i,s})_2.
$$
Hence the claimed independence holds. So we get a well-defined map
$$
Gr_{Fil}\circ C^{-1}: T\to T.
$$
This map has to be surjective, because the rank is preserved under the flow operator. As $T$ being a finite set, it is bijective and therefore decomposes into a product of cyclic permutations. Thus, for each $i$, the Hodge filtration $Fil$ induces an $f$-periodic flow with initial term $(\sE_{i,s},\Theta_{i,s})^{\oplus m_i}$ for some $f\leq l!$. It induces in turn an $f$-periodic flow with initial term for any factor 
$$
(\sE_{i,s},\Theta_{i,s})\subset (\sE_{i,s},\Theta_{i,s})^{\oplus m_i}\subset (\sE_{s},\Theta_{s}).
$$
This completes the whole proof. 
\end{proof}
      
\section{Periodicity of the uniformizing Higgs bundle}
In this section, we shall investigate into the periodicity of the uniformizing Higgs bundle over $\P^1$ with four simple poles, both in characteristic $p>0$ and in characteristic zero. As a matter of convention, we shall use the notation $(E_{unif},\theta_{unif})$ for the uniformizing Higgs bundle over $\P^1$ with four simple poles over an \emph{arbitrary} field.

Let us start with the periodicity over $\C$.   
\begin{prop}\label{one periodicity}
Let $(E_{unif},\theta_{unif})$ be the uniformizing Higgs bundle over $(\P^1,D)$ over $\C$, where $D$ consists of four distinct points. If $(E_{unif},\theta_{unif})$ is periodic, its period is equal to one. 
\end{prop}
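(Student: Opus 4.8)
The plan is to reduce the statement to a rigid classification of stable graded Higgs bundles in positive characteristic. By Definition \ref{periodic definition}, to show that the period over $\C$ equals one it suffices to exhibit, for a suitable spread and all geometric points $s\in S-Z$, a \emph{one}-periodic Higgs--de Rham flow with initial term the reduction $(E_{unif,s},\theta_{unif,s})$, and this for every $W_2(k(s))$-lifting. First I would record that $(E_{unif},\theta_{unif})$ is stable over an arbitrary field: since $\Omega_{\P^1}(D)=\sO_{\P^1}(2)$ for four points, the Higgs field is an isomorphism $\theta^{1}\colon \sO_{\P^1}(1)\to \sO_{\P^1}(-1)\otimes\Omega_{\P^1}(D)=\sO_{\P^1}(1)$, and a short computation shows that any $\theta$-invariant sub-line-bundle must lie in the summand $\sO_{\P^1}(-1)$, hence has negative degree. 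Assuming periodicity of period $f$, the reductions $(E_{unif,s},\theta_{unif,s})$ are then stable and initialize an $f$-periodic flow, so by Lemma \ref{black hole} every intermediate Higgs term $(E_{i,s},\theta_{i,s})$ is stable; it is moreover of rank two and of degree zero by Proposition 6.3 \cite{LSZ}.

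The heart of the argument is the following classification, which I would carry out over an arbitrary field $k$: \emph{every stable graded logarithmic Higgs bundle of rank two and degree zero over $(\P^1_k,D)$ with $|D|=4$ is isomorphic, up to renumbering the grading, to $(E_{unif},\theta_{unif})$.} Indeed, the grading splits the bundle as $E=\sO(a)\oplus\sO(-a)$, with the nilpotent Higgs field carrying the top graded piece into the lower one, $\theta\colon \sO(a)\to \sO(-a)\otimes\Omega_{\P^1}(D)=\sO(2-a)$. The $\theta$-invariant summand $\sO(-a)$ must have negative degree by stability, forcing $a\geq 1$; and if $\theta=0$ the summand $\sO(a)$ of non-negative degree would destabilize, so $\theta\neq 0$, whence $H^0(\sO(2-2a))\neq 0$ and $a\leq 1$. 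Therefore $a=1$ and $\theta$ is an isomorphism $\sO(1)\to\sO(1)$, which recovers $(E_{unif},\theta_{unif})$ exactly.

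With the classification in hand I would conclude as follows. Applying it to the first intermediate term yields $(E_{1,s},\theta_{1,s})\cong (E_{unif,s},\theta_{unif,s})$ as graded Higgs bundles, up to a renumbering of the grading that is permitted by Lemma \ref{basic properties of periodic Higgs bundles}(iii). But by construction $(E_{1,s},\theta_{1,s})=Gr_{Fil_0}\circ C^{-1}(E_{unif,s},\theta_{unif,s})$, so the Hodge filtration $Fil_0$ coming from the given $f$-periodic flow already furnishes an isomorphism $Gr_{Fil_0}\circ C^{-1}(E_{unif,s},\theta_{unif,s})\cong (E_{unif,s},\theta_{unif,s})$, which is precisely the datum of a one-periodic flow. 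Since this holds for every lifting $\tilde s$ and every $s\in S-Z$, the same spread witnesses period one, and the period over $\C$ is therefore equal to one.

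The main obstacle is not a single hard estimate but the interplay of the classification with the quantifiers of Definition \ref{periodic definition}. One must secure stability of the intermediate term (supplied by Lemma \ref{black hole}, which in turn needs the whole periodic flow) before the rigid numerical classification can be invoked, and one must verify that the one-periodic flow so produced is valid for \emph{every} $W_2(k(s))$-lifting rather than for a single one. By contrast the cohomological classification itself is elementary, resting only on the identity $\Omega_{\P^1}(D)=\sO_{\P^1}(2)$ for four distinct points.
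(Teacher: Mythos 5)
Your proposal is correct and follows essentially the same route as the paper's proof: invoke Lemma \ref{black hole} to get stability of the intermediate Higgs terms, then use the numerical constraint coming from $\Omega_{\P^1}(D)\cong\sO_{\P^1}(2)$ to show that the only stable rank-two, degree-zero graded Higgs bundle on $(\P^1,D)$ is $(E_{unif},\theta_{unif})$ itself, so the flow closes up after one step. The only cosmetic difference is that the paper phrases the classification for bundles of the form $(L\oplus L^{-1},\theta)$ with trivial determinant, while you phrase it for arbitrary stable graded rank-two bundles of degree zero; the content is identical.
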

\begin{proof}
Let $(\P^1,\sD,\sE_{unif},\Theta_{unif})$, defined over $S$, be a spread of $(\P^1,D, E_{unif},\theta_{unif})$. Let $s\in S$ be a geometrically closed point. Shrinking $S$ if necessarily, we may assume $(\sE_{unif,s},\Theta_{unif,s})$ is isomorphic to $(\sO(1)\oplus \sO(-1),id)$ and hence stable of trivial determinant. Let $(L\oplus L^{-1}, \theta)$ be a periodic Higgs bundle over $(\P_s^1,\sD_s)$ with trivial determinant. We claim that if it is stable, then it must be isomorphic to $(\sE_{unif,s},\Theta_{unif,s})$. Indeed, because the Higgs field induces a nonzero morphism $L^{\otimes 2}\to \Omega_{\P_s^1}(\sD_s)\cong \sO(2)$, it follows that $\deg L\leq 2$. As $\deg L\geq 0$ in any case, one has
$$
\deg L=0, 1.
$$
But if $\deg L=0$, we have the Higgs subbundle $(\sO_{\P^1},0)\subset (L\oplus L^{-1}, \theta)$ which violates the stability. Hence $\deg L=1$ and $\theta$ must be an isomorphism for the degree reason. In other words, $(L\oplus L^{-1}, \theta)\cong (\sE_{unif,s},\Theta_{unif,s})$. By Lemma \ref{black hole}, any intermediate Higgs terms of a periodic flow initializing $(\sE_{unif,s},\Theta_{unif,s})$ is periodic, stable and trivial determinant (which is clear). The proposition follows.  
\end{proof}
The pair $(\P^1,D)$ in Proposition \ref{one periodicity} is isomorphic to $(\P^1, 0+1+\lambda+\infty)$ for some $\lambda\neq 0,1$.  
\begin{prop}\label{beauville's list implies periodicity}
If $\lambda$ belongs to the Beauville's list \ref{beauville's list}, then $(E_{unif},\theta_{unif})$ is periodic. 
\end{prop}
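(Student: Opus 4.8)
The plan is to realize the uniformizing Higgs bundle as the weight-one logarithmic Kodaira--Spencer system of an honest geometric family, and then to invoke Theorem \ref{periodicity theorem} directly. Since $\lambda$ is a Beauville number, the uniformizing representation $\rho_\lambda$ underlies a weight-one polarized $\Z$-VHS; this is precisely the content of Beauville's classification \cite{Be}, which exhibits a semistable family of elliptic curves $g\colon Y\to \P^1$ whose singular fibers lie exactly over $D=0+1+\lambda+\infty$ and whose underlying local system is a $\Z$-structure on $\rho_\lambda$. First I would attach to $g$ its degree-one logarithmic Kodaira--Spencer system $(E,\theta)$, so that $E=E^{1,0}\oplus E^{0,1}$ with $E^{1,0}=g_*\Omega^1_{Y/\P^1}(\log B/D)$ and $E^{0,1}=R^1g_*\sO_Y$ (where $B=g^{-1}(D)$), and $\theta=\theta^{1,0}\colon E^{1,0}\to E^{0,1}\otimes\Omega_{\P^1}(D)$. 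By Proposition \ref{polystable} this $(E,\theta)$ is polystable of degree zero.

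Next I would identify $(E,\theta)$ with $(E_{unif},\theta_{unif})$. Because the fibers of $g$ are semistable, the local monodromies around $D$ are unipotent, so (exactly as in the proof of Proposition \ref{polystable}) the Higgs bundle that Simpson's correspondence \cite{Si} attaches to the complex local system $R^1g_*\C$ is the logarithmic Kodaira--Spencer system $(E,\theta)$ itself. On the other hand, by construction $R^1g_*\C\cong\rho_\lambda\otimes\C$, and the excerpt already records that $\rho_\lambda\otimes\C$ corresponds to $(E_{unif},\theta_{unif})$ under the same correspondence. Since the correspondence is an equivalence of categories, $(E,\theta)\cong(E_{unif},\theta_{unif})$. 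As a cross-check one may verify this at the level of numerical invariants: the Arakelov inequality for a weight-one VHS on $(\P^1,D)$ gives $\deg E^{1,0}\le\tfrac12\deg\Omega_{\P^1}(D)=1$, with equality exactly when $\theta^{1,0}$ is an isomorphism; for a modular family this bound is attained, forcing $E^{1,0}\cong\sO_{\P^1}(1)$, $E^{0,1}\cong\sO_{\P^1}(-1)$ and $\theta^{1,0}$ an isomorphism onto $\sO_{\P^1}(-1)\otimes\Omega_{\P^1}(D)$, which is the defining data of $(E_{unif},\theta_{unif})$.

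Finally, $(E_{unif},\theta_{unif})\cong(E,\theta)$ is a graded Higgs subbundle of degree zero in the logarithmic Kodaira--Spencer system of $g$ --- indeed it is the whole system --- so Theorem \ref{periodicity theorem} applies verbatim and yields that $(E_{unif},\theta_{unif})$ is periodic, as claimed (and by Proposition \ref{one periodicity} the period is in fact one). I expect the only genuine obstacle to be the identification step of the second paragraph: one must be sure that the family's logarithmic Kodaira--Spencer system is exactly the Higgs bundle produced by Simpson's correspondence from $\rho_\lambda\otimes\C$, equivalently that its Higgs field is maximal (Hodge bundle of degree one). This is the geometric manifestation of $\P^1-D$ being a modular curve, and it is the place where the hypothesis that $\lambda$ is a Beauville number is really used; everything else is formal once Theorem \ref{periodicity theorem} is available.
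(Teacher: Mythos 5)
Your proposal is correct and follows essentially the same route as the paper: take Beauville's semistable elliptic fibration, form its weight-one logarithmic Kodaira--Spencer system, show by the degree/Arakelov computation that it must be $\sO(1)\oplus\sO(-1)$ with maximal Higgs field, hence $(E_{unif},\theta_{unif})$, and then apply Theorem \ref{periodicity theorem}. The only caveat is that your primary identification step assumes up front that the family's local system is a $\Z$-structure on $\rho_\lambda$ (which is not literally what \cite{Be} states and is in fact \emph{deduced} from the Higgs-bundle identification later in the paper), but your ``cross-check'' via the degree bound is exactly the paper's actual argument and makes the identification self-contained.
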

\begin{proof}
Let $\lambda$ be a value in the Beauville's list. Beauville shows that there is a semistable family of elliptic curves over $(\P^1, D=0+1+\lambda+\infty)$. The family is non-isotrivial, hence the associated period map is nonconstant. It follows that the associated logarithmic Kodaira-Spencer system $(E,\theta)$ has nonzero Higgs field (one may also see this by the existence of singular fibers).  In fact, $(E,\theta)$ is isomorphic to $(E_{unif},\theta_{unif})$. By Proposition \ref{polystable}, $(E,\theta)$ takes the form
$$
\theta: E^{1,0}\to E^{0,1}\otimes \Omega_{\P^1}(D)
$$
with $E^{0,1}$ isomorphic to the dual of $E^{1,0}$ and $\deg E^{1,0}>0$. On the other hand, since $\theta$ is nonzero, we have 
$$
\deg (E^{1,0})^{\otimes 2}\leq \deg \Omega_{\P^1}(D)=2.
$$ 
Thus $\deg E^{1,0}=1$. By Theorem \ref{periodicity theorem}, the proposition follows. 
\end{proof}
We conjecture the converse of Proposition \ref{beauville's list implies periodicity}.
\begin{conjecture}\label{conjecture}
Notation as in Proposition \ref{beauville's list implies periodicity}. If $(E_{unif},\theta_{unif})$ is periodic, then $\lambda$ must be in Beauville's list.
\end{conjecture}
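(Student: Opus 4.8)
The plan is to reduce the conjecture, via one-periodicity, to an explicit arithmetic condition on $\lambda$, and then to recognize that condition as forcing the existence of a $\Z$-lattice — equivalently, of a semistable elliptic family — so that Beauville's classification applies. First I would combine Proposition \ref{one periodicity} and Proposition \ref{transcendental number}: if $(E_{unif},\theta_{unif})$ is periodic then it is \emph{one}-periodic and $\lambda$ is algebraic. We may therefore fix the number field $K=\Q(\lambda)$, spread $(\P^1,D,E_{unif},\theta_{unif})$ over a ring of $S$-integers of $K$, and, after discarding the finitely many bad primes, study the reduction at each prime $\mathfrak p$ of residue characteristic $p$. By Definition \ref{periodic definition} the hypothesis is that for \emph{all} such good $\mathfrak p$ (and all $W_2$-liftings) the reduction is one-periodic. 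By the stability analysis inside the proof of Proposition \ref{one periodicity}, together with Lemma \ref{black hole} and Lemma \ref{uniqueness on filtration}, the flow collapses to a single self-isomorphism, so the only datum to be decided at $\mathfrak p$ is the \emph{existence} of the relevant Hodge filtration.

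The technical heart is an explicit determination of $Gr_{Fil}\circ C^{-1}$ applied to $(\sO(1)\oplus\sO(-1),\theta_{unif})$. Since this Higgs bundle has rank two and level one with four simple poles at $\{0,1,\lambda,\infty\}$, I would compute the logarithmic inverse Cartier transform of Schepler and Ogus--Vologodsky directly: $C^{-1}$ produces a rank-two logarithmic flat bundle $(V,\nabla)$ of degree zero on $\P^1_{k(\mathfrak p)}$ whose isomorphism type and connection matrix depend explicitly on $\lambda$ modulo $\mathfrak p$. By Proposition \ref{one periodicity} the flow returns a Higgs bundle of the rigid shape $(\sO(1)\oplus\sO(-1),\mathrm{iso})$ precisely when $(V,\nabla)$ admits a stable Hodge filtration, and by Lemma \ref{uniqueness on filtration} such a filtration is then unique. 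Thus one-periodicity at $\mathfrak p$ is equivalent to the existence in $(V,\nabla)$ of a saturated line subbundle $L\cong\sO(1)$, with quotient $\sO(-1)$ and with the second fundamental form of $\nabla$ an isomorphism $L\xrightarrow{\sim}(V/L)\otimes\Omega_{\P^1}(D)$ — a single explicit algebraic condition, of Hasse-invariant type, on the residue $\lambda\bmod\mathfrak p$. Running this over all good primes yields the infinite family of congruences constituting Question \ref{arithmetic question}, and one verifies that each Beauville value \ref{beauville's list} satisfies them, the Frobenius structure on the de Rham cohomology of the associated elliptic family supplying the filtration (this is the content of Proposition \ref{beauville's list implies periodicity}).

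The remaining, and genuinely difficult, step is the purely arithmetic converse: an algebraic number satisfying the congruence at almost all $\mathfrak p$ must already lie in the list \ref{beauville's list}. The conceptual expectation is that uniform one-periodicity upgrades the collection of mod-$p$ Frobenius structures to a single crystalline, hence motivic, object; by the correspondence between periodic Higgs-de Rham flows and integral $p$-adic local systems this should equip $\rho_{\lambda}$ with a $\Z$-lattice, so that $\rho_{\lambda}$ underlies a weight-one $\Z$-VHS, whereupon Theorem \ref{main result} and Beauville's theorem pin $\lambda$ down. I expect the principal obstacle to be exactly this globalization: excluding \emph{spurious} algebraic $\lambda$ that pass every individual congruence without the mod-$p$ data cohering into one crystalline companion over $\Spec(\sO_K)$. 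A direct assault would instead try to recognize the explicit Hasse-type congruences through the arithmetic of $G$-functions and André's theory of globally bounded series, or through an effective equidistribution input forcing the fixed-point locus of the flow to have density one only for $\lambda$ in the list; but as the authors remark, turning either strategy into a proof of Question \ref{arithmetic question} is what remains open.
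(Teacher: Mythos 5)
This statement is a \emph{conjecture}, and the paper does not prove it: the authors explicitly reduce it to Question \ref{arithmetic question} and state that solving that question is beyond their reach. Your proposal is therefore not a proof either, and you are candid about this in your final paragraph. What you have written is essentially a faithful reconstruction of the paper's own partial progress: periodicity forces one-periodicity (Proposition \ref{one periodicity}) and algebraicity of $\lambda$ (Proposition \ref{transcendental number}); the inverse Cartier transform of the uniformizing Higgs bundle can be computed explicitly, turning one-periodicity at a good prime into concrete linear-algebra conditions (Proposition \ref{condition for periodicity in char p}, i.e.\ $\det T_0=0$ and $\rank T_1=p-1$); and the conjecture then becomes the arithmetic Question \ref{arithmetic question}. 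The forward verification for Beauville values is Proposition \ref{beauville's list implies periodicity}. Up to this point your outline matches the paper's route.

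The genuine gap is the one you name yourself: nothing in the proposal shows that an algebraic $\lambda$ satisfying the congruence conditions at almost all places must lie in the list \eqref{beauville's list}. Your ``conceptual expectation'' that uniform one-periodicity should assemble the mod-$p$ data into a single crystalline/motivic object, and hence a $\Z$-lattice for $\rho_{\lambda}$, is precisely the globalization step that is open (and is the subject of the forthcoming work \cite{KYZ} cited in the introduction); invoking $G$-functions or equidistribution is a suggestion, not an argument. One further technical inaccuracy worth flagging: you describe the periodicity criterion at $\mathfrak p$ as ``a single explicit algebraic condition \ldots on the residue $\lambda \bmod \mathfrak p$,'' but the paper's matrix $T$ depends on the pair $(\lambda_0,\lambda_1)\in W_2(k)$, i.e.\ on the choice of $W_2$-lifting, equivalently on $\lambda$ modulo $\mathfrak p^2$. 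Definition \ref{periodic definition} demands periodicity for \emph{all} $W_2$-liftings, and it is exactly this dependence on the second Witt coordinate (there are at most $p$ admissible $\lambda_1$ for each $\lambda_0$) that drives the proof of Proposition \ref{transcendental number}; collapsing the condition to one on $\lambda \bmod \mathfrak p$ would lose that argument.
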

We may give a very partial answer of the conjecture.
\begin{prop}\label{transcendental number}
If $(E_{unif},\theta_{unif})$ over $(\P^1,0+1+\lambda+\infty)$ is periodic, then $\lambda$ is algebraic. 
\end{prop}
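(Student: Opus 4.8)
The plan is to reduce the statement to a finiteness assertion in positive characteristic and then play it against the transcendence of $\lambda$ by a spreading-out argument. Suppose, for contradiction, that $\lambda$ is transcendental over $\Q$ while $(E_{unif},\theta_{unif})$ is periodic. By Proposition \ref{one periodicity} the period is $1$, so I may fix a spread $(\P^1,\sD,\sE_{unif},\Theta_{unif})\to S=\Spec(A)$ and a closed subset $Z\subset S$ with finite image $T\subset \Spec(\Z)$ such that the reduction at every geometric point $s\in S-Z$ is one-periodic (for every $W_2(k(s))$-lifting). Here $A\subset \C$ is a finitely generated $\Z$-algebra containing $\lambda$; since $\lambda$ is transcendental, $\Z[\lambda]\cong\Z[t]$ sits inside $A$, so $A$ is a $\Z$-flat domain of dimension $\geq 2$. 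By generic flatness and the fact that $\lambda$ is nonconstant on the generic fibre $S_{\Q}$, after enlarging the finite set $T$ I may assume that for every prime $p\notin T$ the fibre $S_p=\Spec(A\otimes_{\Z}\F_p)$ has dimension $\geq 1$ and that $\lambda$ restricts to a nonconstant function on $S_p$. In particular, for such a fixed $p$, the values $\lambda_s\in\overline{\F}_p$ attached to the closed points $s\in S_p$ form an infinite set.

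First I would isolate the characteristic-$p$ content in the form of the \emph{periodic locus}. For a prime $p$ put
$$
P_p=\{\,\mu\in\overline{\F}_p\setminus\{0,1\}:\ (E_{unif},\theta_{unif})\ \text{over}\ (\P^1_{\overline{\F}_p},\,0+1+\mu+\infty)\ \text{is one-periodic}\,\}.
$$
By the discussion above, periodicity of $(E_{unif},\theta_{unif})$ forces $\lambda_s\in P_p$ for every closed point $s\in S_p$, so $P_p$ is infinite for our chosen $p$. The whole argument therefore comes down to the claim that $P_p$ is a \emph{finite} set for every $p\notin T$, which is the step I regard as the main obstacle. To establish it I would compute the inverse Cartier transform $C^{-1}(E_{unif},\theta_{unif})$ explicitly: since $E_{unif}=\sO(1)\oplus\sO(-1)$ with $\theta$ an isomorphism, $C^{-1}$ produces a rank-two logarithmic flat bundle of degree zero on $(\P^1,0+1+\mu+\infty)$ (a Heun-type connection), whose construction depends algebraically on $\mu$ and on the chosen $W_2$-lifting. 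By Lemma \ref{uniqueness on filtration} the Hodge filtration whose associated graded is stable is unique up to a shift, and by Lemma \ref{black hole} that graded bundle is again stable with trivial determinant; hence, arguing as in Proposition \ref{one periodicity}, one-periodicity at $\mu$ is equivalent to the vanishing of a single obstruction, that is, to an algebraic equation $H_p(\mu)=0$ whose coefficients lie in $\F_p$ and which records the position of the jump of this unique Hodge filtration relative to the Frobenius $z\mapsto z^p$ and the four points $0,1,\mu,\infty$.

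The heart of the matter is then to verify that $H_p$ is \emph{not identically zero}, equivalently that the uniformizing Higgs bundle over the generic point $\overline{\F}_p(t)$ (with fourth puncture $t$) fails to be one-periodic. This I expect to be the genuinely computational point, and it is exactly the ``brute-force calculation'' alluded to in the introduction; once $H_p\not\equiv 0$ is known, $P_p$ is contained in the zero set of a nonzero polynomial on the affine $\mu$-line, hence finite, and the desired contradiction follows immediately. I would prove the nonvanishing either by extracting the leading behaviour of $H_p$ as $\mu\to\infty$ from the explicit Heun connection of the previous step, or by specializing $\mu$ to a single value at which the flow manifestly does not close up. It is worth stressing that finiteness of each $P_p$ already yields algebraicity of every periodic $\lambda$, whereas the much finer problem of characterizing the $\mu$ lying in $P_p$ for \emph{all} $p$ is precisely the concrete arithmetic question to which the full Conjecture \ref{conjecture} is reduced.
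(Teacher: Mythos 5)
Your reduction hinges on the finiteness of the locus $P_p\subset\overline{\F}_p$ of one-periodic values $\mu$, and this is where the argument has a genuine gap. First, $P_p$ is not well defined as stated: one-periodicity in characteristic $p$ is relative to a choice of $W_2$-lifting of $(\P^1,0+1+\mu+\infty)$, i.e.\ to a pair $(\mu,\mu_1)\in W_2(\overline{\F}_p)$. You acknowledge this dependence when describing $C^{-1}$, but then suppress it by packaging the obstruction as a polynomial $H_p(\mu)$ in $\mu$ alone. If $P_p$ is read as ``periodic for some lifting,'' its finiteness is not established anywhere, you defer exactly this point ($H_p\not\equiv 0$) without proof, and it is essentially as delicate as Conjecture \ref{conjecture} itself (the paper explicitly declares the resulting arithmetic problem, Question \ref{arithmetic question}, beyond its reach). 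So the proposal does not close.

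The paper extracts the contradiction from a different and much cheaper place: the quantifier ``for \emph{all} $W_2(k(s))$-liftings'' in Definition \ref{periodic definition}. When $\lambda$ is transcendental the spread is $S=\Spec(\Z[\lambda])\cong\A^1_{\Z}$, so a \emph{single} geometric point $\lambda_0\in\overline{\F}_p$ of $S$ admits infinitely many $W_2$-liftings inside $S$, one for each $\lambda_1\in\overline{\F}_p$. Proposition \ref{condition for periodicity in char p} shows that periodicity with respect to the lifting $(\lambda_0,\lambda_1)$ forces $\det T_0=0$, a polynomial condition of degree $p$ in $\lambda_1$ ($\lambda_1$ sits on the diagonal of the $p\times p$ matrix $T_0$), so at most $p$ of the infinitely many required liftings can be periodic. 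Periodicity therefore already fails at that one point; no statement about the set of periodic $\mu$ in the special fibre is needed, and none is available. To repair your argument you would either have to prove finiteness of $P_p$ under the ``exists a lifting'' reading --- which is open --- or switch to varying the lifting at a fixed $\lambda_0$, which is the paper's route and is exactly what distinguishes the transcendental case (a two-dimensional spread with many liftings per point) from the algebraic case (a spread finite over $\Z$, with essentially one lifting per place).
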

We shall give a proof of the last statement below. In order to approach the problem in the conjecture, a detailed study of periodicity in positive characteristic is necessary. So we let $k=\bar{\mathbb{F}}_{p}$, and intend to make an explicit study of the periodicity condition for $(E_{unif},\theta_{unif})$ over $k$. 

Let $\lambda=(\lambda_0,\lambda_1)\in W_2(k)$ with $\lambda_0$ distinct from $\{0,1\}$. This $\lambda$ gives rise to an obvious $W_2(k)$-lifting of the pair $(\P^1,0+1+\lambda_0+\infty)$ over $k$. If one fixes a $\lambda_0\in k$, then any $W_2(k)$-lifting of the pair  $(\P^1,0+1+\lambda_0+\infty)$ is isomorphic to the one given by some $\lambda\in W_2(k)$ as above. We define a $p\times (2p+1)$ matrix $T$ as follows:

\[
T_{ij}=
\begin{cases}
\lambda_1 & \text{if $i=j$}\\
(-1)^{i-j+1}\frac{{{p}\choose{p-i+j}}}{p}(1-\lambda_0^{i-j}) & \text{if $i>j$}\\
(-1)^{j-i+1}\frac{{{p}\choose{p-j+i}}}{p}(\lambda_0^{p-j+i}-\lambda_0^p) & \text{if $i<j\leq p$}\\
(-1)^{i+j-p-1}\frac{{{p}\choose{i+j-p-1}}}{p}(1-\lambda_0^{i+j-p-1}) & \text{if $p<j\leq 2p-i$}\\
0 & \text{if $j>2p-i$}
\end{cases}
\]
Take $T_m$ as the $(p-m)\times(p+m)$ submatrix of $T$ containing the first $p-m$ rows and first $p+m$ columns ($0\leq m\leq p-1$). We obtain the following result:
\begin{prop}\label{condition for periodicity in char p}
Let $k=\bar{\F}_p$ and $\lambda_0\in k$ be an element distinct from $\{0,1\}$. Then $(E_{unif},\theta_{unif})$ over $\mathbb{P}^1-\{0,1,\lambda_0,\infty\}$ with respect to the $W_2(k)$-lifting given by  $\lambda:=(\lambda_0,\lambda_1)\in W_2(k)$ is periodic if and only if 
$$
\det(T_0)=0;\quad \rank(T_1)=p-1.
$$
\end{prop}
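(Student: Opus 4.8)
The plan is to turn periodicity into a single explicit linear-algebra condition on the de Rham bundle obtained by one application of the inverse Cartier transform. First I would record the reduction furnished by the preceding results: by Proposition \ref{one periodicity} a periodic $(E_{unif},\theta_{unif})$ is necessarily one-periodic, so periodicity is equivalent to the existence of a Hodge filtration $Fil$ on the logarithmic flat bundle $(V,\nabla):=C^{-1}(E_{unif},\theta_{unif})$ whose associated graded $Gr_{Fil}(V,\nabla)$ is isomorphic, as a graded logarithmic Higgs bundle, to $(E_{unif},\theta_{unif})$. Since $(E_{unif},\theta_{unif})$ is stable, Lemma \ref{uniqueness on filtration} shows that such a $Fil$ is unique up to a shift of indices; hence periodicity is a genuine yes/no property of the pair $(\lambda_0,\lambda_1)$, and the task is to detect it. If the graded is to have pieces $\sO(1)$ and $\sO(-1)$, then $Fil^1\subset V$ is forced to be a line subbundle of degree one, with quotient of degree $-1$ and with nonzero induced Higgs field.

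Next I would compute $(V,\nabla)$ explicitly. Using Schepler's logarithmic inverse Cartier transform, I fix the $W_2(k)$-lift of $(\P^1,0+1+\lambda_0+\infty)$ determined by $\lambda=(\lambda_0,\lambda_1)$ together with a Frobenius lift which on the affine coordinate is $x\mapsto x^p$ away from the moving point and is corrected near $\lambda$ so as to preserve the lifted logarithmic structure; it is precisely this correction that feeds $\lambda_1$ into the computation. Transporting $\theta_{unif}$ through the transform and gluing the local Frobenius lifts, the connection/transition data acquires the divided binomials $\binom{p}{k}/p$ evaluated against the points $0,1,\lambda_0,\infty$, which are exactly the off-diagonal entries of $T$, while the diagonal contribution $\lambda_1$ records the chosen lift. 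I would write $\nabla$ as an explicit connection matrix in a basis adapted to $F^{*}E_{unif}$, keeping careful track of the four simple logarithmic poles and of the behaviour at $\infty$.

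I would then translate the existence of the desired $Fil^1$ into a linear system. The candidate subbundle $\sO(1)\hookrightarrow V$ together with the requirement that $Gr_{Fil}(V,\nabla)\cong(E_{unif},\theta_{unif})$ (Griffiths transversality plus the prescribed isomorphism type $\sO(1)\oplus\sO(-1)$) imposes compatibility equations among sections; the relevant spaces have dimensions $p=\dim_k H^0(\P^1,\sO(p-1))$ and $2p+1=\dim_k H^0(\P^1,\sO(2p))$, exactly the sizes of the rows and columns of $T$. Identifying the matrix of this system with $T$, the two conditions fall out as follows. Since $T_{ii}=\lambda_1$, one has $\det(T_0)=\det(\lambda_1 I+M(\lambda_0))$ for a matrix $M(\lambda_0)$ independent of $\lambda_1$, so $\det(T_0)=0$ is a characteristic-polynomial condition recording the \emph{existence} of a line along which the filtration can be built; the condition $\rank(T_1)=p-1$ then guarantees that the resulting subbundle is of the \emph{correct Hodge type}, namely exactly $\sO(1)$ with $\sO(-1)$ quotient and nonzero graded Higgs field, ruling out the degenerate solutions (a more positive subbundle, or a failure of transversality) that would make the graded differ from $(E_{unif},\theta_{unif})$. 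By Lemma \ref{black hole} the single intermediate Higgs term stays stable of trivial determinant, so the two conditions together give the stated equivalence.

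The main obstacle is the second step: carrying out the logarithmic inverse Cartier transform honestly and extracting the precise entries of $T$. The delicate points are the choice and gluing of the Frobenius lift compatibly with the moving fourth point $\lambda$ (where $\lambda_1$ enters), the bookkeeping of the four simple logarithmic poles together with the chart at $\infty$, and the combinatorics that turns the difference quotient $\tilde F^{*}/p$ into the divided binomials $\binom{p}{k}/p$ weighted by powers of $\lambda_0$. A second, more conceptual difficulty is matching the two algebraic conditions to their geometric meanings with full rigor: proving that $\det(T_0)=0$ is equivalent to the mere existence of the filtration, while $\rank(T_1)=p-1$ is exactly the non-degeneracy pinning down the Hodge type, requires a careful dimension count of the solution space and an analysis of how the degree of the candidate subbundle is controlled by the rank of the truncated matrix $T_1$.
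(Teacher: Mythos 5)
Your proposal follows essentially the same route as the paper: reduce to one-periodicity, observe (via $\mathrm{Ext}^1(\sO(-1),\sO(1))=0$ and a degree argument, the paper's Lemma \ref{inverse Cartier for periodicity}) that one-periodicity is equivalent to the bundle part of $C^{-1}(E_{unif},\theta_{unif})$ splitting as $\sO(1)\oplus\sO(-1)$, compute that bundle explicitly from the two log Frobenius liftings to get the matrix $T$, and read off the splitting type from the truncations $T_m$ --- which is exactly what the paper's Proposition \ref{calculation of n} does via its diagonalization algorithm. One small correction to your gloss: Griffiths transversality is vacuous for a two-step filtration, so $\rank(T_1)=p-1$ is not about transversality but precisely rules out the more unbalanced splitting types $\sO(n)\oplus\sO(-n)$ with $n\geq 2$, while $\det(T_0)=0$ rules out the balanced case $\sO\oplus\sO$.
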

Let us first proceed to the proof of Proposition \ref{transcendental number}.
\begin{proof}
Assume the contrary. So if $\lambda\in \C$ is transcendental, then a spread of $(\P^1,0+1+\lambda+\infty)$ is defined over $S=\Spec(\Z[\lambda])$ which is isomorphic to the affine line $\A^1_{\Z}$ over $\Z$. By Proposition \ref{condition for periodicity in char p}, for $\lambda_0\in \bar{\F}_p$ which is considered as a geometrically closed point of $S$, there are at most $p$ $\lambda_1$s such that $(E_{unif},\theta_{unif})$ over $(\P^1,0+1+\lambda_0+\infty)$ is periodic with respect to the $W_2$-lifting determined by $\lambda=(\lambda_0,\lambda_1)$. Therefore, at any rate, the uniformizing Higgs bundle over $(\P^1,0+1+\lambda+\infty)$ cannot be periodic. Contradiction.  
\end{proof}
\begin{remark}
One may perhaps further explore into the implication of Proposition \ref{condition for periodicity in char p}. Let $\lambda\in \sO_K[\frac{1}{N}]$ be an algebraic number, where $K\subset \C$ is an algebraic number field and $N$ a natural number. Then for almost all places $\nu\in K$, $K$ is unramified at $\nu$ and therefore $\sO_K/\nu^2\cong W_2(\F_q)\subset W_2(\bar{\F}_p)$. Fix \emph{any} such morphism $r_{\nu}=(r_{\nu,0},r_{\nu,1})$. Conjecture \ref{conjecture} amounts to the truth of the following arithmetic
\begin{question}\label{arithmetic question}
Let $\lambda\in K$ be as above. Assume for almost all places $\nu$ of $K$, the pair $(r_{\nu,0}(\lambda), r_{\nu,1}(\lambda))$ satisfies 
$$
\det T_0(r_{\nu,0}(\lambda), r_{\nu,1}(\lambda))=0,\quad \rank \ T_1(r_{\nu,0}(\lambda), r_{\nu,1}(\lambda))=\textrm{char}(k(\nu))-1.
$$
Is $\lambda$ a Beauville's algebraic number?	
\end{question}  
\end{remark}
Now we turn to the proof of Proposition \ref{condition for periodicity in char p}. It relies on the following analysis.
\begin{lemma}\label{inverse Cartier for periodicity}
	$(E_{unif},\theta_{unif})$ over $\mathbb{P}^1-\{0,1,\lambda_0,\infty\}$ with respect to some $W_2(k)$-lifting determined by $\lambda=(\lambda_0,\lambda_1)$ is one periodic if and only if the bundle part of the inverse Cartier transform of $(E_{unif},\theta_{unif})$ with respect to that lifting is isomorphic to $\mathcal{O}(-1)\oplus \mathcal{O}(1)$.
\end{lemma}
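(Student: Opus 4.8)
The plan is to translate one-periodicity into the existence of a suitable Hodge filtration on the flat bundle $C^{-1}(E_{unif},\theta_{unif})$ and then to read off its underlying bundle. Write $(V,\nabla)=C^{-1}(E_{unif},\theta_{unif})$ for the given $W_2(k)$-lifting. Since $C^{-1}$ multiplies degrees by $p$ and $\deg E_{unif}=0$, the bundle $V$ is a rank-two degree-zero bundle on $\P^1$, so by Grothendieck's theorem $V\cong\sO(a)\oplus\sO(-a)$ for a unique $a\geq 0$. The assertion is then precisely that one-periodicity is equivalent to $a=1$.

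For the implication ($\Rightarrow$), one-periodicity provides by definition a Hodge filtration $Fil$ on $(V,\nabla)$ with $Gr_{Fil}(V,\nabla)\cong(E_{unif},\theta_{unif})$ as graded Higgs bundles. Since $(E_{unif},\theta_{unif})$ has exactly two graded pieces, $\sO(1)$ (the source of the Higgs field) and $\sO(-1)$ (its target), the filtration is necessarily two-step with $F^1\cong\sO(1)$ and $V/F^1\cong\sO(-1)$; thus $V$ is an extension of $\sO(-1)$ by $\sO(1)$. As $\mathrm{Ext}^1_{\P^1}(\sO(-1),\sO(1))=H^1(\P^1,\sO(2))=0$, this extension splits and $V\cong\sO(-1)\oplus\sO(1)$, i.e.\ $a=1$.

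For the implication ($\Leftarrow$), assume $V\cong\sO(1)\oplus\sO(-1)$. The sub-line-bundle of maximal degree is then unique: it has degree $1$ and equals the summand $L:=\sO(1)$, since any morphism $\sO(1)\to V$ has vanishing component into $\sO(-1)$. Set $F^1:=L$; since the two Hodge levels are $0$ and $1$ this is a Hodge filtration of level $\leq p-1$, and Griffiths transversality holds trivially for a two-step filtration. The induced graded Higgs field is the second fundamental form $\bar\theta\colon\sO(1)\to(V/L)\otimes\Omega_{\P^1}(D)\cong\sO(-1)\otimes\sO(2)\cong\sO(1)$, so as soon as $\bar\theta\neq 0$ it is an isomorphism and $Gr_{Fil}(V,\nabla)\cong(E_{unif},\theta_{unif})$, yielding one-periodicity.

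It remains to establish $\bar\theta\neq 0$, which I expect to be the main obstacle. Suppose $\bar\theta=0$; then $L$ is $\nabla$-stable, hence stable under the $p$-curvature $\psi$ of $\nabla$. By Ogus--Vologodsky the $p$-curvature of $C^{-1}(E_{unif},\theta_{unif})$ is, up to the Frobenius twist, $F^*\theta_{unif}$, which is nonzero and nilpotent because $\theta_{unif}$ is an isomorphism. A nonzero nilpotent endomorphism of a rank-two bundle possesses a unique invariant line subbundle, namely its kernel; and along the conjugate filtration of $V$ the $p$-curvature acts as $F^*\theta_{unif}\colon F^*\sO(1)\to F^*\sO(-1)$, so this kernel is the subbundle $F^*\sO(-1)\cong\sO(-p)$ of degree $-p$. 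But $L\cong\sO(1)$ has degree $1\neq -p$, a contradiction. Hence $\bar\theta\neq 0$. The genuinely technical point is precisely this identification: it requires the Ogus--Vologodsky relation between $C^{-1}$ and its $p$-curvature together with a description of the conjugate filtration of $V$, so that the unique $\psi$-invariant line can be pinned down to have negative degree; one should also verify the residue and level bookkeeping ensuring that $F^1=L$ legitimately enters a Higgs--de Rham flow in the nilpotent-residue logarithmic setting.
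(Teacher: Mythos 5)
Your proof is correct and follows essentially the same route as the paper: the forward direction via the vanishing of $\mathrm{Ext}^1(\sO(-1),\sO(1))=H^1(\P^1,\sO(2))=0$, and the converse by taking $Fil^1=\sO(1)$ and showing the induced graded Higgs field is nonzero, hence an isomorphism for degree reasons. The only divergence is that where the paper dismisses $\nabla$-invariance of $\sO(1)$ with a terse ``for the degree reason,'' you supply an explicit justification via the $p$-curvature --- whose kernel is the degree $-p$ subbundle $F^*\sO(-1)\subset H$, as one can also read off from the lower-triangular exponential-twisting gluing matrix appearing later in the paper --- which is a valid and more detailed substitute for that step.
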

\begin{proof}
Write $(H,\nabla)=C^{-1}(E_{unif},\theta_{unif})$, where $C^{-1}$ refers to the inverse Cartier transform with respect to the $W_2(k)$-lifting determined by $\lambda$. Assume for this $W_2$-lifting, $(E_{unif},\theta_{unif})$ is one-periodic. Then one has an short exact sequence
	$$
	0\to \mathcal{O}(1)\to H\to \mathcal{O}(-1)\to 0.
	$$
	Computing that $\dim \mathrm{Ext}^1(\mathcal{O}(-1),\mathcal{O}(1))=h^1(\mathcal{O}(2))=h^0(\mathcal{O}(-4))=0$, we find that $H$ must be isomorphic to $\mathcal{O}(-1)\oplus \mathcal{O}(1)$. Conversely, let us assume that $H\cong \mathcal{O}(-1)\oplus \mathcal{O}(1)$. Set $\mathcal{O}(1)\cong Fil^1\subset H$. Note that $Fil^1$ cannot be $\nabla$-invariant for the degree reason. Thus, the graded Higgs field $Gr_{Fil}\nabla$ must be nonzero. Again for the degree reason, it must be maximal, that is, one has 
	$$
	Gr_{Fil}\nabla: Fil^1\cong H/Fil\otimes \Omega^1_{\mathbb{P}^1}(\log 0+1+\lambda_0+\infty).
	$$
	This completes the lemma. 
\end{proof}

The next step is then to determine $(H,\nabla)=C^{-1}(E_{unif},\theta_{unif})$, especially the bundle part $H$. Note that there exists a unique natural number $n$ such that $H\cong \sO(n)\oplus \sO(-n)$. Our main goal in the following is to determine the $n$. We do this via the approach of exponential twisting to the inverse Cartier transform (see \cite{LSZ0}, \cite{LSYZ} Appendix). Set $\tilde X=\mathbb{P}^1-\{0,1,\tilde \lambda, \infty\}$ over $W_2$ and $X$ to be its reduction. The curve $X$ has a distinguished open affine covering $\mathcal{U}=\{U_{\alpha},U_{\beta}\}$ with
$$
U_{\alpha}=\mathbb{P}^1-\{0,\infty\};\quad U_{\beta}=\mathbb{P}^1-\{1,\lambda_0\}.
$$
Set $\tilde U_{\alpha}\subset \tilde X$ to be the open affine scheme by restricting $\tilde X$ to $U_{\alpha}$ and similarly define the open subscheme $\tilde U_{\beta}\subset \tilde X$. Let $z$ be the affine coordiante of $\mathbb{P}^1-\{\infty\}$. Then, one may choose the standard log Frobenius lifting determined by
$$
\tilde F_{\alpha}(z)=z^p; \quad \tilde F_{\beta}(w)=w^p,
$$
where $w=\frac{z-\lambda}{z-1}$ ($w$ is a linear transformation of $\mathbb{P}^1_{W_2}$ which maps $\lambda$ to zero and 1 to $\infty$). On the overlap $U_{\alpha\beta}=U_{\alpha}\cap U_{\beta}$, we use the coordinate $z$. Therefore, the second Frobenius lifting $\tilde F_{\beta}$ on $\tilde U_{\alpha}$ 
is written as
$$
z\mapsto \frac{(z-\lambda)^p-F(\lambda)(z-1)^p}{(z-\lambda)^p-(z-1)^p},
$$
where $F(\lambda)=(\lambda_0^p,\lambda_1^p)$. Here is a double check: RHS mod $p$ is nothing but $z^p$. By the definition of a log Frobenius lifting, RHS
can be written as $z^p(1+pa)$ with 
$$
a=\frac{1}{p}\cdot \frac{(z^p-F(\lambda))(z-1)^p-(z-\lambda)^p(z^p-1)}{z^p[(z-\lambda)^p-(z-1)^p]}\in k[z,\frac{1}{z(z-1)(z-\lambda_0)}]=\mathcal{O}_{U_{\alpha\beta}}
$$ 
(Notice that $F(\lambda)=(\lambda_0^p,0)+(0,\lambda_1^p)=\lambda^p+p(\lambda_1,0)$, which means the numerator is divisible by $p$).

Actually, we can write $a$ more precisely into:

$$
a=\frac{\sum_{i=1}^{p-1}(-1)^i\frac{{p \choose i}}{p}(1-\lambda_0^i)z^{2p-i}+\sum_{i=1}^{p-1}(-1)^i\frac{{p \choose i}}{p}(\lambda_0^i-\lambda_0^p)z^{p-i}-\lambda_1(z^p-1)}{(1-\lambda_0^p)z^p}
$$

So one has
$$
\zeta_{\alpha}(d\log z\otimes 1)=d\log z; \quad \zeta_{\beta}(d\log z\otimes 1)=d\log z+da,
$$
and 
$$
dh_{\alpha\beta}(d\log z\otimes 1)=(\zeta_{\beta}-\zeta_{\alpha})(d\log z)=da.
$$ 
Here $\xi_{\alpha},\xi_{\beta}$ and $h_{\alpha\beta}$ appear in Deligne-Illusie's Lemma (see Lemma 2.1 \cite{LSZ0}).

On the other hand, our Higgs bundle reads
$$
E:=E_{unif}=\mathcal{O}(1)\oplus \mathcal{O}(-1). 
$$  
Set 
$$
E_{\alpha}=E|_{U_{\alpha}}=\mathcal{O}_{U_{\alpha}}\{e^1_{\alpha},e^0_{\alpha}\},\quad 
E_{\beta}=E|_{U_{\beta}}=\mathcal{O}_{U_{\beta}}\{e^1_{\beta},e^0_{\beta}\}
$$
and the transition is given by
$$
\{e^1_{\beta},e^0_{\beta}\}=\{e^1_{\alpha},e^0_{\alpha}\}\left(\begin{array}{cc}
z-1& 0 \\
0 & (z-1)^{-1} \\
\end{array}\right).
$$
(The reason is as follows: $\Omega^1=\mathcal{O}(-2)$ and over $U_{\alpha}$ it has basis $e_{\alpha}=d(\frac{z-\lambda_0}{z-1})$ and over $U_{\beta}$ it has basis $e_{\beta}=dz$, the transition is given $e_{\beta}=\frac{(z-1)^2}{\lambda_0-1}e_{\alpha}$.) Therefore, $H$ is obtained by gluing 
$$
H_{\alpha}=F^*E_{\alpha},\quad H_{\beta}=F^*E_{\beta}
$$
via the gluing matrix
$$
\{e^1_{\beta}\otimes 1,e^0_{\beta}\otimes 1\}=\{e^1_{\alpha}\otimes 1,e^0_{\alpha}\otimes 1\}\left(\begin{array}{cc}
(z-1)^{p}& 0 \\
0 & (z-1)^{-p} \\
\end{array}\right)\left(\begin{array}{cc}
1& 0 \\
a & 1 \\
\end{array}\right).
$$
Note $\mathcal{O}_{U_{\alpha}}=k[\frac{z-\lambda_0}{z-1},\frac{z-1}{z-\lambda_0}]$ and $\mathcal{O}_{U_{\beta}}=k[z,z^{-1}]$. We are computing some matrices $P\in \mathrm{GL}_2(\mathcal{O}_{U_{\alpha}})$ and $Q\in \mathrm{GL}_2(\mathcal{O}_{U_{\beta}})$ such that 
$$
P\cdot \left(\begin{array}{cc}
(z-1)^{p}& 0 \\
a(z-1)^{-p} & (z-1)^{-p} \\
\end{array}\right)\cdot Q
$$
is diagonal.

Notice that Proposition \ref{condition for periodicity in char p} is only a special case of the following statement. 
\begin{prop}\label{calculation of n}
Let $\lambda=(\lambda_0,\lambda_1)$ and the resulting $H$ be as above. Then $H\cong \sO(n)\oplus \sO(-n)$ if and only if $T_n$ is the first full rank matrix in the sequence of matrices $\{T_0,T_1,\cdots,T_{p-1}\}$. 
\end{prop}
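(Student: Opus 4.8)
The plan is to read the splitting type of $H$ off the extension it defines. The lower-triangular shape of the gluing matrix
$$
M=\left(\begin{array}{cc} (z-1)^{p} & 0 \\ a(z-1)^{-p} & (z-1)^{-p}\end{array}\right)
$$
exhibits $H$ as an extension $0\to\mathcal{O}(-p)\to H\to\mathcal{O}(p)\to 0$, whose class $\xi\in\mathrm{Ext}^{1}(\mathcal{O}(p),\mathcal{O}(-p))=H^{1}(\mathbb{P}^{1},\mathcal{O}(-2p))$ is represented by the \v{C}ech cocycle $a(z-1)^{-p}$. Since $\det M=1$, Grothendieck's theorem gives $H\cong\mathcal{O}(n)\oplus\mathcal{O}(-n)$ for a unique $n$; composing the inclusion of the top summand with $H\twoheadrightarrow\mathcal{O}(p)$ forces $0\le n\le p$, so only $T_{0},\dots,T_{p-1}$ are relevant. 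It remains to compute $n$, which is the largest degree of a line subbundle of $H$, that is $n=\max\{m:\mathrm{Hom}(\mathcal{O}(m),H)\neq 0\}$.

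To detect these line subbundles I would twist the defining sequence by $\mathcal{O}(-m-1)$ and pass to cohomology. As $H^{0}(\mathcal{O}(-p-m-1))=0$, the long exact sequence yields
$$
\mathrm{Hom}(\mathcal{O}(m+1),H)=H^{0}(H(-m-1))=\ker\big(\delta_{m}\big),
$$
where $\delta_{m}\colon H^{0}(\mathcal{O}(p-1-m))\to H^{1}(\mathcal{O}(-p-1-m))$ is the connecting map, equal to cup product with $\xi$. By Riemann--Roch these spaces have dimensions $p-m$ and $p+m$, exactly matching the shape of $T_{m}$. Hence for $0\le m\le p-1$ one has $\mathrm{Hom}(\mathcal{O}(m+1),H)\neq 0$ if and only if $\delta_{m}$ is not injective, i.e.\ $\mathrm{rank}\,\delta_{m}<p-m$.

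The core of the argument is then to write $\delta_{m}$ as an explicit matrix in monomial bases of $H^{0}(\mathcal{O}(p-1-m))$ and $H^{1}(\mathcal{O}(-p-1-m))$. Representing $H^{1}$ by Laurent tails and cupping each basis monomial with the cocycle $a(z-1)^{-p}$ — expanded via the characteristic-$p$ identity $(z-1)^{p}=z^{p}-1$ and the Laurent form of $a$ displayed above — produces precisely the entries $T_{ij}$, the source and target dimensions cutting out the submatrix $T_{m}$ (first $p-m$ rows, first $p+m$ columns). Since $\mathrm{rank}\,\delta_{m}=\mathrm{rank}\,T_{m}$, the criterion becomes $\mathrm{Hom}(\mathcal{O}(m+1),H)\neq 0\iff\mathrm{rank}\,T_{m}<p-m$. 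Therefore $T_{m}$ has full rank exactly when $m\ge n$, so the first full-rank matrix in $\{T_{0},\dots,T_{p-1}\}$ is $T_{n}$. For $n=1$ this is $\det T_{0}=0$ together with $\mathrm{rank}\,T_{1}=p-1$, which recovers Proposition \ref{condition for periodicity in char p} through Lemma \ref{inverse Cartier for periodicity}.

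I expect the explicit matrix computation to be the main obstacle. One must fix compatible monomial and Laurent bases, write $a(z-1)^{-p}$ as a \v{C}ech representative relative to the nonstandard affine charts $U_{\alpha},U_{\beta}$, carry out the cup products using the characteristic-$p$ simplifications, and check that the resulting coefficients agree on the nose with the five-case formula defining $T_{ij}$ — including the correct placement that yields the $(p-m)\times(p+m)$ truncation and the index shift between $m$ and the twist $\mathcal{O}(-m-1)$. Once this bookkeeping is settled, the rank dichotomy and the identification of the first full-rank index with $n$ are immediate.
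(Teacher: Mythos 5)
Your route is genuinely different from the paper's. You read off the splitting type cohomologically: $H$ sits in $0\to\sO(-p)\to H\to\sO(p)\to 0$ with extension class $\xi$ given by the off-diagonal entry of the gluing matrix, and $n=\max\{m:\Hom(\sO(m),H)\neq 0\}$ is detected by the (non-)injectivity of the connecting maps $\delta_m\colon H^0(\sO(p-1-m))\to H^1(\sO(-p-1-m))$, i.e.\ cup product with $\xi$. The dimension count $(p-m)\times(p+m)$ matches $T_m$, and the rank dichotomy then yields exactly the statement. The paper instead diagonalizes the transition matrix by hand: it finds $f,g$ with $\deg f\le p$, $\deg g\le p-1$ and $f\cdot A+g\cdot z^p\equiv 0 \bmod (z-1)^{2p}$, builds an explicit $P,Q$ conjugating the transition matrix to $\mathrm{diag}((z-1)^{p-c},(z-1)^{c-p})$ with $c=\max(\deg f,\deg g)$, and then identifies the solvability condition on $(f_0,\dots,f_c)$ with the non-full-rank condition on $T_{p-c-1}$. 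These are two faces of the same linear algebra: the paper's vector $(f_0,\dots,f_c)$ annihilating the relevant remainder coefficients $R_{ij}$ is precisely an element of $\ker\delta_{p-c-1}$ in your language (a degree-$\le c$ polynomial mapping into the maximal sub-line-bundle). Your formulation is cleaner in that Grothendieck splitting, the bound $n\le p$, and the monotonicity of the full-rank condition all come for free, whereas the paper has to run its Euclidean-style algorithm (Steps 1--2, the $\bar f,\bar g,\beta',\gamma',\alpha$ bookkeeping) to exhibit the diagonalization explicitly.

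The caveat is that the decisive step of your argument is exactly the part you defer: the verification that, in monomial/Laurent-tail bases, the matrix of $\delta_m$ is (up to the stated row/column selection) the matrix $T_m$ built from the five-case formula for $T_{ij}$, with the correct index shift and the correct placement of the columns coming from the high-degree and low-degree tails. That identification is where all the content of the proposition about the \emph{specific} matrix $T$ resides -- it is the same computation the paper performs when it expands $a$ explicitly and checks $R_{ij}=T_{i+1,j+1}$ for $j\le p-1$ and $R_{ij}=T_{i+1,3p-j}$ for $2p-i\le j\le 2p-1$. (A small symptom: your cocycle representative should be $a(z-1)^{-2p}$ rather than $a(z-1)^{-p}$ once the trivializations of $\sO(-2p)$ on $U_{\alpha\beta}$ are fixed; such normalizations matter precisely in this step.) So as written the proposal establishes a correct and arguably preferable framework, and correctly reduces the proposition to a finite, routine computation, but does not yet prove that the computation lands on the paper's $T$. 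To complete it you should carry out the cup-product calculation and reconcile your column ordering with the paper's, including the degenerate possibility $\xi=0$ (i.e.\ $n=p$), which both you and the paper implicitly exclude.
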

\begin{proof}
	We show that the transition matrix can be diagonilized to $\mathrm{diag}((z-1)^n,(z-1)^{-n})$ by the following algorithm.

	Denote $a$ by $A/z^p$, and notice that $A$ has degree $2p-1$. Our following argument actually does not require this form to be simplified.
	
	Step 1: Find $f,g\in k[z]$, $\mathrm{deg}(f),\mathrm{deg}(g)\leq p$ such that $f\cdot A+g\cdot z^p$ is divisible by $(z-1)^{2p}$ and $(f,g)=(z-1)^l$ for some $l\geq 0$ by the following algorithm:
	
	Consider the following equations, where $Q_i,R_i\in k[z]$, $\mathrm{deg}(R_i)\leq 2p-1$:
	\begin{align*}
	A=Q_0\cdot(z-1)^{2p}+R_0 \\
	z\cdot A=Q_1\cdot(z-1)^{2p}+R_1 \\
	\cdots \\
	z^p\cdot A=Q_p\cdot(z-1)^{2p}+R_p
	\end{align*}
	and denote the coefficient of $z^j$ in $R_i$ by $R_{ij}$.
	Since the matrix $(R_{ij})_{0\leq i \leq p, 0\leq j \leq p-1}$ has at most rank $p$, the following linear system has a non-zero solution:
	\begin{gather}
	\begin{pmatrix}
	f_0 & \cdots & f_p
	\end{pmatrix}
	\cdot
	\begin{pmatrix}
	R_{00} & \cdots & R_{0,p-1}\\
	\vdots & \ddots & \vdots \\
	R_{p,0} & \cdots & R_{p,p-1}
	\end{pmatrix}
	=
	\begin{pmatrix}
	0 & \cdots & 0
	\end{pmatrix}
	\end{gather}
	And by taking $f=f_pz^p+\cdots+f_0$, we have $f\cdot A=h\cdot (z-1)^{2p}+R$, where $h=f_p\cdot Q_p+\cdots+f_0\cdot Q_0$, $R=R_p\cdot f_p+\cdots+R_0\cdot f_0$. By the matrix above, we know that the coefficient of $1,z,\cdots,z^{p-1}$ of $R$ are all $0$, thus $R$ can be written as $-g\cdot z^p$, and $\mathrm{deg}(g)\leq p-1$. After we find a pair of $f$ and $g$, it is not difficult to find a pair that satisfies $(f,g)=(z-1)^l$.
	
	Step 2: Denote $c=\mathrm{max}(\mathrm{deg}(f),\mathrm{deg}(g))$, and $h$ as above. Find $\beta,\gamma\in k[z]$ such that $\text{deg}(\beta),\text{deg}(\gamma)\leq c-2p$ and $f\cdot\gamma+g\cdot\beta=(z-1)^{2p}$. Take $\bar{f}=f/(z-1)^l$, $\bar{g}=g/(z-1)^l$, we have $(\bar{f},\bar{g})=1$.
	
	Case A: $z-1\nmid \bar{f}$. In this case, find $\sigma\in k[z]$ such that $\text{deg}(\sigma)\leq 2p-2c+l$ and $(z-1)^l | h(\beta-\sigma\cdot\bar{f})-z^p$.
	
	Case B: $z-1 | \bar{f}$. In this case, we must have $z-1\nmid\bar{g}$, thus we can find $\sigma\in k[z]$ such that $\text{deg}(\sigma)\leq 2p-2c+l$ and $(z-1)^l | A-(\gamma+\sigma\cdot\bar{g})h$.
	
	Take $\gamma'=\gamma+\sigma\cdot\bar{g}$,$\beta'=\beta-\sigma\cdot\bar{f}$. We still have $\text{deg}(\beta'),\text{deg}(\gamma')\leq 2p-c$ and $f\cdot\gamma'+g\cdot\beta'=(z-1)^{2p}$. Take $\alpha=(a\cdot\beta'-\gamma')/(z-1)^{2p}$. Since $\alpha=(A\cdot\beta'-z^p\cdot\gamma')/z^p(z-1)^{2p}$, from the fact that $f\cdot(A\cdot\beta'-z^p\cdot\gamma')=(h\beta'-z^p)(z-1)^{2p}$ and $g\cdot(A\cdot\beta'-z^p\cdot\gamma')=(A-\gamma' h)(z-1)^{2p}$ we know that $A\cdot\beta'-z^p\cdot\gamma'$ is divisible by $(z-1)^{2p}$. So we have $\alpha\in k[z,1/z]$.
	
	By direct calculation, the following equations holds:
	\begin{gather}
	\begin{pmatrix}
	\alpha & \beta' \\
	-\frac{h}{z^p} & f
	\end{pmatrix}
	\cdot
	\begin{pmatrix}
	(z-1)^p & 0 \\
	a(z-1)^{-p} & (z-1)^{-p}
	\end{pmatrix}
	\cdot
	\begin{pmatrix}
	f\cdot(z-1)^{-c} & -\beta'\cdot(z-1)^{c-2p} \\
	g\cdot(z-1)^{-c} & \gamma'\cdot(z-1)^{c-2p}
	\end{pmatrix}
	\\
	=
	\begin{pmatrix}
	(z-1)^{p-c} & 0 \\
	0 & (z-1)^{c-p}
	\end{pmatrix}
	\end{gather}
	Thus, we have $H\cong\sO(p-c)\oplus\sO(c-p)$.%, which means the corresponding $(E_{unif},\theta_{unif})$ is $(p-c)$-periodic.
	
	Notice that we have $\mathrm{deg}(g)\leq p-1$, by taking the remainder of $(z-1)^{2p}$ on both side of $f\cdot A+g\cdot z^p=h\cdot (z-1)^{2p}$ we get the following equation:
	$$
	R_p\cdot f_p+\cdots+R_0\cdot f_0+g\cdot z^p=0
	$$
	Or we can write this equation in the following form:
	\begin{gather}
	\begin{pmatrix}
	f_0 & \cdots & f_p
	\end{pmatrix}
	\cdot
	\begin{pmatrix}
	R_{00} & \cdots & R_{0,2p-1}\\
	\vdots & \ddots & \vdots \\
	R_{p,0} & \cdots & R_{p,2p-1}
	\end{pmatrix}
	+
	\begin{pmatrix}
	0 & \cdots & 0 & g_0 & \cdots & g_{p-1}
	\end{pmatrix}
	=
	\begin{pmatrix}
	0 & \cdots & 0
	\end{pmatrix}
	\end{gather}
	Since we have the precise expresssion of $a$, we can calculate the value of elements of $R$ and we have $R_{ij}=T_{i+1,j+1}$ for $i,j\leq p-1$ and $R_{ij}=T_{i+1,3p-j}$ for $i\leq p-1, 2p-i\leq j \leq 2p-1$.
	
	We can see that $c=\mathrm{max}(\mathrm{deg}(f),\mathrm{deg}(g))$ if and only if the following condition holds: 
	
	First, the following linear system has a non-zero solution:
	\begin{gather}
	\begin{pmatrix}
	f_0 & \cdots & f_c
	\end{pmatrix}
	\cdot
	\begin{pmatrix}
	R_{00} & \cdots & R_{0,p-1} & R_{0,2p-1} & \cdots & R_{0,p+c+1}\\
	\vdots & \ddots & \vdots & \vdots & \ddots & \vdots \\
	R_{c,0} & \cdots & R_{c,p-1} & R_{c,2p-1} & \cdots & R_{c,p+c+1}
	\end{pmatrix}
	=
	\begin{pmatrix}
	0 & \cdots & 0
	\end{pmatrix}
	\end{gather}
	(Notice that the $(c+1)\times(2p-c-1)$ matrix is exactly $T_{p-c-1}$.)
	
	Meanwhile, the following linear system does not have a non-zero solution:
	\begin{gather}
	\begin{pmatrix}
	f_0 & \cdots & f_{c-1}
	\end{pmatrix}
	\cdot
	\begin{pmatrix}
	R_{00} & \cdots & R_{0,p-1} & R_{0,2p-1} & \cdots & R_{0,p+c}\\
	\vdots & \ddots & \vdots & \vdots & \ddots & \vdots \\
	R_{c-1,0} & \cdots & R_{c-1,p-1} & R_{c-1,2p-1} & \cdots & R_{c-1,p+c}
	\end{pmatrix}
	=
	\begin{pmatrix}
	0 & \cdots & 0
	\end{pmatrix}
	\end{gather}
	(Notice that the $c\times(2p-c)$ matrix is exactly $T_{p-c}$.)
	
	Thus, for the sequence $\{T_0,\cdots,T_{p-1}\}$, $T_{p-c-1}$ (and all the matrices before $T_{p-c-1}$) cannot be of full rank and $T_{p-c}$ (and all the matrices after $T_{p-c}$ is (are) of full rank.

\end{proof}

\section{Semistable families of abelian varieties with real multiplication}\label{construction of families}

In this section, we shall construct a semistable family of abelian varieties with $\sO_F$ multiplication. The input are the properties (i)-(iii) in Theorem \ref{main result}. We denote $\P^1-\{0,1,\lambda,\infty\}$ by $X$.

Assume (i). Let $[F:\Q]=d$. Let $\V_{\sO_F}$ be the corresponding $\sO_F$ local system of rank 2 to $\rho_{\lambda}$. We define 
$$
\omega:\sO_F^{\oplus2}\times\sO_F^{\oplus2}\xrightarrow{\det}\sO_F\xrightarrow{\textrm{Tr}}\Z
$$
\begin{lemma}
$\omega$ is skew-symmetric and non-degenerate.
\end{lemma}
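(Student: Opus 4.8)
The plan is to separate the two assertions: treat skew-symmetry as a formal consequence of the antisymmetry of the determinant, and reduce non-degeneracy to the classical non-degeneracy of the trace pairing of the separable extension $F/\Q$.

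For skew-symmetry, I would unwind the definition and write, for $v=(v_1,v_2)$ and $w=(w_1,w_2)$ in $\sO_F^{\oplus 2}$,
$$
\omega(v,w)=\textrm{Tr}_{F|\Q}(v_1 w_2-v_2 w_1),
$$
where the inner map $\det$ is the determinant of the $2\times 2$ matrix whose columns are $v$ and $w$. Swapping the two columns negates the determinant, i.e. $\det(v\,|\,w)=-\det(w\,|\,v)$; since $\textrm{Tr}_{F|\Q}$ is $\Z$-linear, applying it respects this sign and yields $\omega(v,w)=-\omega(w,v)$ immediately.

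For non-degeneracy, note that $\omega$ is a $\Z$-bilinear form on the free $\Z$-module $\sO_F^{\oplus 2}$ of rank $2d$, so it suffices to show the radical is trivial. Suppose $v=(v_1,v_2)$ satisfies $\omega(v,w)=0$ for every $w$. Testing against $w=(w_1,0)$ gives $\textrm{Tr}_{F|\Q}(v_2 w_1)=0$ for all $w_1\in\sO_F$, and testing against $w=(0,w_2)$ gives $\textrm{Tr}_{F|\Q}(v_1 w_2)=0$ for all $w_2\in\sO_F$. Since $\sO_F$ spans $F$ over $\Q$, by $\Q$-linearity these upgrade to $\textrm{Tr}_{F|\Q}(v_2 x)=0$ and $\textrm{Tr}_{F|\Q}(v_1 x)=0$ for all $x\in F$. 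The key input is that the symmetric form $(x,y)\mapsto\textrm{Tr}_{F|\Q}(xy)$ on $F$ is non-degenerate because $F/\Q$ is separable (being a number field in characteristic zero, automatically so); equivalently, the discriminant of $F$ is nonzero. Hence $v_1=v_2=0$, so $v=0$ and $\omega$ is non-degenerate.

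There is no serious obstacle here: the only substantive ingredient is the non-degeneracy of the trace form, which is exactly the point where the hypothesis that $F$ is a totally real (hence separable) subfield of $\R$ enters. If a sharper statement were desired — namely that $\omega$ is a perfect pairing after inverting the discriminant — one could further observe that the Gram determinant of $\omega$ equals, up to sign, a power of the discriminant of $\sO_F$; but for the lemma as stated, only the vanishing of the radical is required.
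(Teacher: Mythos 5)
Your proof is correct. The paper itself states this lemma without any proof, so there is no argument to compare against; your write-up supplies exactly the standard reasoning one would expect the authors to have in mind: skew-symmetry from the antisymmetry of $\det$ under column exchange, and non-degeneracy (in the sense of trivial radical, i.e.\ non-degeneracy after tensoring with $\Q$) from the non-degeneracy of the trace form of the separable extension $F/\Q$. Your closing remark correctly flags the one genuine subtlety -- that $\omega$ is not unimodular over $\Z$ unless the discriminant of $F$ is $\pm 1$ -- and one could add that for $F$ totally real the trace form is in fact positive definite ($\textrm{Tr}(x^2)=\sum_\sigma \sigma(x)^2>0$), which gives non-degeneracy even more directly.
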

Assume (ii). Regarding $\V_{\sO_F}$ as a $\Z$-local system, it underlies a polarized $\Z$-variation of Hodge structure (with polarization $\omega$). Let $H=\V_{\sO_F}\otimes_{\Z}\sO_X$ and $Fil^1\subset H$ be the Hodge filtration. As the composite $\V_{\sO_F}\hookrightarrow H\rightarrow H/\mathrm{Fil}^1$ is injective, we get a family of abelian varieties 
$$
f: \mathcal{A}:=\V_{\sO_F}\backslash H/Fil^1\to X.
$$ 
We compactify it to a projective morphism $\bar f: \bar \sA\to \P^1$. By blowing-up with centers contained in the singular fibers of $\bar f$, we may make $\bar f$ into a quasi-semistable family. Since the local monodromies of $\V_{\sO_F}$ around $D$ are unipotent, we actually obtain a semistable family $\bar f$. 

Assume (iii). We show that the family $f$ admits $\sO_F$ multiplication, that is, $\sO_F\subset \End_\Z(\sA/X)$. By the assumption, there exists one point $x\in X$ such that the multiplication by any element of $\sO_F$ is of Hodge type $(0,0)$. It is clear that the multiplication by $\sO_F$ on $\V_{\sO_F}$ commutes with the monodromy action. So the multiplication by an element of $\sO_F$ is a global section of the weight zero $\Z$-VHS $\End_\Z(\V_{\sO_F})\cong \V_{\sO_F}^*\otimes_{\Z}\V_{\sO_F}$. Applying the rigidity theorem \cite[Corollary 7.23]{Sc}, the multiplication by any element of $\sO_F$ is of Hodge type $(0,0)$ everywhere, which means $\sO_F\subset\End_\Z(\mathcal{A}/X)$. The construction is completed.

\section{Proof of main result}
For any $\lambda$ in the Beauville's list, Beauville constructs a semistable family $\bar f_{\lambda}$ of elliptic curves over $\P^1$ with singular locus exactly equal to $\{0,1,\lambda,\infty\}$. Let $f_{\lambda}$ be the smooth part of $\bar f_{\lambda}$. We shall explain that the uniformizing representation $\rho_{\lambda}$ over $P^1-\{0,1,\lambda,\infty\}$ satisfies the properties in Theorem \ref{main result}. We simply take $F=\Q$ and claim that the monodromy representation associated to the family $f_{\lambda}$ underlies the real local system $\rho_{\lambda}$. If the claim holds, then the properties (i)-(iii) become obvious. The most efficient way to get this is to use the Simpson correspondence \cite{Si}. Set $\rho_{geo}$ to be the monodromy representation of $f_{\lambda}$. The corresponding logarithmic Higgs bundle to $\rho_{geo}\otimes_{\Z}\C$ is the logarithmic Kodaira-Spencer system $(E,\theta)$ associated to $f_{\lambda}$. The proof of Proposition \ref{beauville's list implies periodicity} explains that $(E,\theta)\cong (E_{unif},\theta_{unif})$. Therefore, $\rho_{geo}\otimes_{\Z}\C\cong \rho_{\lambda}\otimes_{\R} \C$. Set $\rho_{geo,\R}=\rho_{geo}\otimes \R$. So we have
$$
H^0(U, \rho_{geo,\R}\otimes_{\R}\rho^{*}_{\lambda})\otimes_{\R}\C=H^0(U, (\rho_{geo,\R}\otimes_{\R}\rho^{*}_{\lambda})\otimes_{\R}\C)
$$      
is nonzero. Thus, $H^0(U, (\rho_{geo,\R}\otimes_{\R}\rho^{*}_{\lambda}))\cong \R$ as the complex local systems are irreducible. It follows that $\rho_{geo}\otimes_{\Z}\R\cong \rho_{\lambda}$ as claimed. 

The difficult part of Theorem \ref{main result} is the converse direction. By the construction of semistable families of abelian varieties in \S\ref{construction of families} and the classification of Beauville \cite{Be}, it suffices to show that properties (i)-(iii) forces $F$ to be $\Q$. In other words, we are going to prove the following negative result:
\begin{claim}
For any $\lambda$, there exists \emph{no} totally real subfield $F$ of degree $>1$ such that $\rho_{\lambda}$ satisfies properties (i)-(iii) simultaneously. 
\end{claim}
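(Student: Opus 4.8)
The plan is to assume that (i)--(iii) hold for a totally real field $F$ with $d:=[F:\Q]>1$, which we may take to be the field generated by the traces $\{\mathrm{tr}\,\rho_\lambda(\gamma)\}$ since $\rho_\lambda$ is absolutely irreducible. I would then build the associated polarized $\Z$-VHS, decompose its Higgs bundle into $F$-isotypic pieces, show that each piece is forced to be the uniformizing Higgs bundle, and finally compare traces: this will force every real embedding of $F$ to act trivially, contradicting $d>1$.

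First I would invoke the construction of \S\ref{construction of families}: from (i) and (ii) the local system $\V_{\sO_F}$, viewed over $\Z$, underlies a polarized $\Z$-VHS of weight one on $X=\P^1-\{0,1,\lambda,\infty\}$ with unipotent local monodromies, and (iii) together with the rigidity theorem guarantees that multiplication by $\sO_F$ is of Hodge type $(0,0)$ \emph{everywhere}. Let $(E,\theta)$ be the associated logarithmic Kodaira--Spencer system; by Proposition \ref{polystable} it is polystable of degree zero. Because the $\sO_F$-action is by morphisms of the VHS, it preserves the Hodge grading, so the splitting $F\otimes_\Q\C\cong\prod_{i=1}^d\C$ induces an $F$-equivariant decomposition into graded sub-Higgs-bundles $(E,\theta)=\bigoplus_{i=1}^d(E_i,\theta_i)$, where $(E_i,\theta_i)$ is the rank-two factor attached to the real embedding $\sigma_i\colon F\hookrightarrow\R$ and corresponds, under Simpson's correspondence, to the complex local system $\sigma_i\circ\rho_\lambda$. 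Since $F$ is totally real, each factor is defined over $\R$ with Hodge numbers $h^{1,0}=h^{0,1}=1$, so $E_i=E_i^{1,0}\oplus E_i^{0,1}$ with $E_i^{0,1}\cong(E_i^{1,0})^\vee$ by the polarization; in particular each $(E_i,\theta_i)$ has trivial determinant.

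The key step is to show $(E_i,\theta_i)\cong(E_{unif},\theta_{unif})$ for every $i$. The uniformizing representation $\rho_\lambda$ is Zariski-dense in $\SL_2$, and Zariski-density is preserved by the field automorphism $\sigma_i$; hence each $\sigma_i\circ\rho_\lambda$ is absolutely irreducible and each $(E_i,\theta_i)$ is stable. Now the geometry of four punctures enters decisively: writing $a_i=\deg E_i^{1,0}$, nonvanishing of $\theta_i\colon E_i^{1,0}\to E_i^{0,1}\otimes\Omega_{\P^1}(\log D)$ forces $a_i\le\tfrac12\deg\Omega_{\P^1}(\log D)=1$, while stability (the $\theta$-invariant sub-line-bundle $E_i^{0,1}$ must have negative degree) forces $a_i\ge 1$; hence $a_i=1$, $E_i\cong\sO(1)\oplus\sO(-1)$, and $\theta_i$ is an isomorphism. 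As in Proposition \ref{one periodicity} this pins down the isomorphism class, yielding $(E_i,\theta_i)\cong(E_{unif},\theta_{unif})$. Alternatively, each factor is a degree-zero graded sub-Higgs-bundle of a Kodaira--Spencer system, hence periodic by Theorem \ref{periodicity theorem}, and one concludes through the characterization of stable periodic bundles underlying Proposition \ref{one periodicity}.

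Finally I would extract the contradiction. By Simpson's correspondence, $(E_i,\theta_i)\cong(E_{unif},\theta_{unif})$ gives $\sigma_i\circ\rho_\lambda\cong\rho_\lambda$ as complex local systems for every $i$; taking traces yields $\sigma_i\big(\mathrm{tr}\,\rho_\lambda(\gamma)\big)=\mathrm{tr}\,\rho_\lambda(\gamma)$ for all $\gamma\in\pi_1(X)$. Since $F$ is generated by these traces, every real embedding $\sigma_i$ restricts to the identity on $F$, which is impossible unless $d=1$; this contradicts $d>1$ and proves the Claim. The step I expect to be the main obstacle is the identification of every Galois-conjugate factor with the uniformizing Higgs bundle: it is precisely here that one must combine stability of the conjugate local systems with the numerical rigidity special to $\P^1$ minus four points (the Arakelov bound being attained on the nose), and this is exactly why the statement can fail over bases of higher genus, where non-maximal stable factors --- as in Teichm\"uller curves --- are permitted.
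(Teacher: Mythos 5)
Your route is genuinely different from the paper's. The paper never decomposes the complex Higgs bundle into Galois--conjugate factors; instead it reduces the spread of the family modulo an \emph{inert} prime $\mathfrak p$ of $F$ (supplied by \v{C}ebotarev), observes that the mod $p$ crystalline representation attached to the one-periodic flow is a restriction of scalars from $\F_{p^d}$, and invokes Corollaries 3.10, 3.11 of \cite{LSZ} to conclude that every simple factor of the reduced Kodaira--Spencer system is $d$-periodic with pairwise non-isomorphic factors --- contradicting the one-periodicity of $(E_{unif},\theta_{unif})$ forced by Proposition \ref{one periodicity}. Your argument stays over $\C$: the core step, that on $\P^1$ minus exactly four points the inequalities $1\le a_i\le\tfrac12\deg\Omega_{\P^1}(\log D)=1$ pin every conjugate factor down to $(E_{unif},\theta_{unif})$, is correct (stability of each factor does follow from irreducibility of $\sigma_i\circ\rho_\lambda$, and the degree bounds are right), and your closing remark about why this collapses in higher genus is exactly the right way to see what is special here. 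If it closed, this would be a more elementary proof than the paper's.

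There is, however, a real gap at the end. From $(E_i,\theta_i)\cong(E_{unif},\theta_{unif})$ and Simpson's correspondence you get $\sigma_i(\tr\rho_\lambda(\gamma))=\tr\rho_\lambda(\gamma)$ for all $\gamma$ and all $i$; this says only that every embedding of $F$ restricts to the same embedding of the \emph{trace field} $E=\Q(\tr\rho_\lambda(\gamma))\subset F$, hence that $E=\Q$. It does not give $F=\Q$, which is what the Claim asserts. Your opening move --- ``we may take $F$ to be the field generated by the traces'' --- is not justified: property (i) is a statement about a specific $F$, and an absolutely irreducible representation with trace field $E$ need not be conjugate into $\SL_2(\sO_E)$ (the $\Q$-algebra spanned by the image is a quaternion algebra over $E$ that may fail to split, and even splitness does not hand you an integral model for free); properties (ii) and (iii) would then also have to be re-verified for the smaller field. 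To close the gap you need an extra input, e.g.\ that $\pi_1(\P^1-\{0,1,\lambda,\infty\})$ has parabolic local monodromies, so the image of $\rho_\lambda$ contains unipotent elements $\ne 1$, whose images give nonzero nilpotents in $\Q[\rho_\lambda(\pi_1)]$ and force that quaternion algebra to be $M_2(\Q)$; combined with integrality of traces and finite generation this descends $\rho_\lambda$ to $\SL_2(\Z)$. The paper's characteristic-$p$ argument sidesteps this issue entirely, because the pairwise non-isomorphism of the mod $p$ factors is extracted from the $\F_{p^d}$-module structure of the crystalline representation rather than from a trace computation over $\C$.
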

\begin{proof}
Let $\bar f: \sA\to \P^1$ be the semistable family of abelian varieties with singular locus $D$ and with real multiplication $\sO_F$. Let $\rho$ be the weight one $\Z$-VHS associated to $f$, and $(E,\theta)$ be the corresponding logarithmic Kodaira-Spencer system. By construction, $\rho=Res_{F|\Q}\rho_{\lambda}$ and hence $\rho_{\lambda}\subset \rho\otimes \R$ as $\R$-sublocal system. So $(E_{unif},\theta_{unif})\subset (E,\theta)$ and by Proposition \ref{polystable}, it is in fact a direct factor. We know by Theorem \ref{periodicity theorem}, $(E_{unif},\theta_{unif})$ is periodic. The contradiction arises from the exact analysis of the period. First of all, by Proposition \ref{one periodicity}, the period of $(E_{unif},\theta_{unif})$ is in any case equal to one. However, we are going to argue that the existence of the real multiplication $\sO_F$ ($d=[F:\Q]>1$) as endomorphism of $f$ leads to a contradiction with one-periodicity.

By \v{C}ebotarev density theorem, the set of inert primes of $F$ is of positive Dirichlet density. In particular, it is an infinite set. Let $\mathfrak{p}$ be an inert prime over the rational prime $p$. So there are natural isomorphisms of $\Z_p$-algebras:
$$
\sO_F\otimes_{\Z}\Z_p\cong \sO_{F_{\mathfrak p}}\cong \Z_{p^d},
$$
where $F_{\mathfrak p}$ is the completion of $F$ at the prime $\mathfrak p$. Let $\mathfrak f$ be a spread of $f$ defined over $S$ and let $(\sE,\Theta)$ be the corresponding logarithmic Kodaira-Spencer system (see the proof of Theorem \ref{periodicity theorem}). Let $s\in S$ be a geometrically closed point of $\textrm{char}(k(s))=p$ (which is assumed to be large enough) and $\tilde s: \Spec(W(k(s)))\to S$ a closed subscheme lifting $s: \Spec(k(s))\to S$. Let $\hat s$ be the generic point of $\tilde s$. Set $\mathfrak{f}_{\tilde s}$ to be base change of $\mathfrak f$ over $\tilde s$. It is a semistable family over $W(k(s))$ with $\sO_F\subset \End(\mathfrak{f}_{\tilde s})$.

Let $\V$ be the mod $p$ crystalline representation of the algebraic fundamental group $\pi_1(U_{\hat s})$ of the family $\mathfrak{f}_{\tilde s}$, which corresponds to the one-periodic flow with the initial Higgs term $(\sE_{s},\Theta_{s})$, the logarithmic Kodaira-Spencer system associated to the special fiber of $\mathfrak{f}_{\tilde s}$ (Theorem 1.1 \cite{LSYZ}). Now by transportation of structure, the local system $\V$ contains $\sO_F$ as endomorphism subalgebra. Thus, $\V$ is a $\pi_1(U_{\hat s})-\sO_F\otimes_{\Z}\F_p$-module and by the previous discussion, it means that $\V$ is the restriction of scalar of a $\F_{p^d}$-local system, that is, $\V=Res_{\F_{p^d}|\F_p}\W$ for some rank two $\F_{p^d}$-crystalline representation $\W$. By Corollaries 3.10, 3.11 (ii) \cite{LSZ}, it follows that any simple factor of $(\sE_{s},\Theta_{s})$ is $d$-periodic and all its simple factors are pairwisely non-isomorphic. In particular, the factor $(E_{unif},\theta_{unif})$ cannot be one-periodic. Contradiction. The claim is proved.

\end{proof}

\end{document}